\newcommand{\N}{\mathbb{N}}
\newcommand{\id}{\text{id}}
\newcommand\Define[1]{\textbf{#1}}
\newcommand{\sse}{\subseteq}
\newcommand{\opp}{\text{op}}
\newcounter{counter}
\theoremstyle{definition}
\newtheorem{theorem}[counter]{Theorem}
\newtheorem{proposition}[counter]{Proposition}
\newtheorem{lemma}[counter]{Lemma}
\newtheorem{definition}[counter]{Definition}
\newtheorem{corollary}[counter]{Corollary}
\newtheorem{example}[counter]{Example}
\newtheorem{remark}[counter]{Remark}
\title{A Categorical Construction of the Real Unit Interval}
\author{John van de Wetering
\institute{Radboud Universiteit Nijmegen}
\institute{University of Oxford}
\email{john@vandewetering.name}}
\begin{document}

\maketitle

\begin{abstract}
	The real unit interval is the fundamental building block for many branches of mathematics like probability theory, measure theory, convex sets and homotopy theory.
    However, a priori the unit interval could be considered an arbitrary choice and one can wonder if there is some more canonical way in which the unit interval can be constructed.
    In this paper we find such a construction by using the theory of effect algebras. 
    We show that the real unit interval is the unique non-initial, non-final irreducible algebra of a particular monad on the category of bounded posets. The algebras of this monad carry an order, multiplication, addition and complement, and as such model much of the operations we need to do on probabilities.
    On a technical level, we show that both the categories of $\omega$-complete effect algebras as well as that of $\omega$-complete effect monoids are monadic over the category of bounded posets using Beck's monadicity theorem. The characterisation of the real unit interval then follows easily using a recent representation theorem for $\omega$-complete effect monoids.
\end{abstract}

\section{Introduction}

Probabilities in our world are represented by numbers in the real unit interval $[0,1]$. This leads to the usage of $[0,1]$ in all fields of mathematics that are motivated by a probabilistic view, such as probability theory and measure theory, and more indirectly the theory of convex sets and homotopy theory.
It naturally raises the question of how we can generalise $[0,1]$ or consider it abstractly using some of its properties. For instance, we could generalise it to the set of continuous functions $C(X,[0,1])$ from some topological space $X$ to $[0,1]$ in order to represent probabilities that can vary across space, or we could replace $[0,1]$ by an algebraic generalisation like an \emph{MV-algebra}~\cite{chang1958algebraic} or an \emph{effect algebra}~\cite{foulis1994effect} to study fields like probability theory or convex sets in a more abstract setting~\cite{jacobs2011probabilities,jacobs2017distances,adams2015state}.

Turning around this question we can ask if we can recover the standard real unit interval $[0,1]$ from some abstract conditions in order to get a clearer understanding of its central importance in mathematics. 
A categorical characterisation of the unit interval was given by Freyd~\cite{freyd2008algebraic}, who showed that the unit interval is the final coalgebra of an operation that consists of `glueing together' the ends of two spaces, which can be captured by the \emph{midpoint operation} $(a,b)\mapsto \frac12(a+b)$. This was later generalised to give characterisations of higher-dimensional simplices~\cite{leinster2011general}.
However, the ability to take a midpoint is not what one would usually consider a crucial feature of $[0,1]$, especially if we consider $[0,1]$ as a set of probabilities.
Instead, as argued in~\cite{first}, the operations on $[0,1]$ that have a direct correspondence to its use as a set of probabilities are the partially defined addition $a+b$ of probabilities $a$ and $b$ satisfying $a+b\leq 1$ that corresponds to the coarse-graining of independent events; the complement $a\mapsto 1-a$ corresponding to negation; the multiplication $a\cdot b$ corresponding to the conjunction of events; and the partial order $a\leq b$ to tell us which event is more likely. 
We will aim to derive the unit interval from algebraic structure mimicking these operations. 

An \emph{effect algebra} has a partially defined addition, and has a complement operation so that for every $a$ we have a unique $a^\perp$ with $a+a^\perp = 1$. Effect algebras were originally introduced as an abstraction of the set of effects in a C$^*$-algebra~\cite{foulis1994effect}.
As shown by Jenča~\cite{jenvca2015effect}, the category of effect algebras is isomorphic to the category of Eilenberg-Moore algebras of the \emph{Kalmbach monad}~\cite{kalmbach1977orthomodular} which arises from the free-forgetful adjunction between the category of \emph{bounded} posets (those posets with a minimum and maximum element) $\textbf{BPos}$ and the category of orthomodular posets.
Hence, one could `discover' effect algebras by considering the basic structures of (orthomodular) bounded posets.

The category of effect algebras is symmetric monoidal~\cite{jacobs2012coreflections} and the monoids resulting from this tensor product are called \emph{effect monoids}. Concretely, effect monoids are effect algebras that have an associative, unital, distributive multiplication operation. The unit interval $[0,1]$ is an effect monoid with its multiplication operation given by the standard multiplication.

The unit interval has one additional property that we aim to capture: when we have an infinite sequence of probabilities of which we can sum every finite subset, then we can sum the entire infinite set. Extending the partially defined addition operation to an infinitary partially defined operation results in a \emph{partially additive monoid}~\cite{arbib1980partially,ManesA1986}.
Effect algebras that are also partially additive monoids can be equivalently described as effect algebras that are \emph{$\omega$-complete}, i.e.~for which every increasing sequence has a supremum.
We show that the category of the resulting $\omega$-effect-algebras is equivalent to an Eilenberg-Moore category for some monad on $\textbf{BPos}$ (although note that we do not succeed in constructing this monad explicitly, instead relying on Beck's monadicity theorem).
The category of $\omega$-effect-algebras is also symmetric monoidal, and its monoids, the $\omega$-effect-monoids, have all the structure that we were interested in in $[0,1]$: a countable partial addition, a complement, and a multiplication. 
It turns out that these $\omega$-effect-monoids are particularly well-behaved. 
A recent representation theorem by Westerbaan, Westerbaan and the author~\cite{first} shows that each $\omega$-effect-monoid embeds into a direct sum of a Boolean algebra and the set of continuous functions from a topological space into $[0,1]$. In particular, we can show that the only irreducible $\omega$-effect-monoids, those that cannot be written as a non-trivial direct sum, are $\{0\}$, $\{0,1\}$ and the unit interval $[0,1]$. 
Each of these three possibilities gives a different view on probabilities. The effect monoid $\{0\}$ represents the \emph{inconsistent} world where $0=1$. The effect monoid $\{0,1\}$ represents the \emph{deterministic} world, where everything either holds or does not hold with certainty. And finally, $[0,1]$ gives us the \emph{probabilistic} world.
As $\{0\}$ is the final object in the category of $\omega$-effect-monoids and $\{0,1\}$ is the initial object, this establishes $[0,1]$ as the unique non-initial, non-final irreducible monoid in the Eilenberg-Moore category of a monad on \textbf{BPos}.

Hence, starting from the Kalmbach monad resulting from the free-forgetful adjunction between bounded posets and orthomodular posets, we get effect algebras. Then by considering an extension of the addition operation to a countable addition operation we find an `$\omega$-Kalmbach' monad, of which the algebras are $\omega$-effect-algebras. Then by focusing on the monoids in this category we find the unit interval as one of three basic irreducible objects.

Finally, we also show that the category of $\omega$-effect-monoids itself is monadic over $\textbf{BPos}$, so that we can also directly exhibit the unit interval as an irreducible algebra over $\textbf{BPos}$.

These results shed some light on why exactly the real unit interval is the canonical choice for describing probabilities: any structure with an order, a negation, a multiplication, and countable sums is either deterministic or is built out of copies of the real unit interval.
The results in this paper are in a sense a more categorical reformulation of the characterisation theorem of $\omega$-effect-monoids in~\cite{first}. That characterisation has been applied to the study of effectus theory~\cite{sigma}, sequential effect algebras~\cite{second}, and a reconstruction of quantum theory~\cite{reconstruction}.

\section{Preliminaries}

We start by recalling all the necessary concepts: orthomodular posets, effect algebras, $\omega$-completeness, and effect monoids. Although the technical results in this section are all known, by combining them we get our first categorical characterisation of the real unit interval, which is a new observation in itself.

\subsection{Posets, orthomodularity and effect algebras}

\begin{definition}
    A poset $P$ is \Define{bounded} when it has a minimal element $0$ and a maximal element $1$. A morphism of bounded posets is an order-preserving map $f:P\rightarrow Q$ satisfying $f(0)=0$ and $f(1)=1$. We denote the category of bounded posets by \textbf{BPos}.
\end{definition}

\begin{remark}\label{rem:poset-category}
    Call a category \emph{thin} when it has at most one morphism between each pair of objects, and \emph{skeletal} when isomorphic objects are equal. Then a poset corresponds to a small thin skeletal category, where we have a morphism $a\rightarrow b$ iff $a\leq b$. Monotone maps are then simply functors between these types of categories. 
    A bounded poset is such a category with an initial and final object, and a \textbf{BPos} morphism is a functor preserving the initial and final objects.
\end{remark}

Our next definition concerns a particular type of bounded poset that we call \emph{orthomodular}. 
Intuitively, an orthomodular poset is a poset equipped with a `negation operation' $\perp$. This satisfies the classical properties we would expect for a negation: $(a^\perp)^\perp = a$ (not not $a$ is just $a$), $a\wedge a^\perp = 0$ ($a$ and not $a$ is false), $a\leq b \implies b^\perp \leq a^\perp$ (If $a$ implies $b$, then not $b$ implies not $a$). However, it also satisfies two conditions that are perhaps less familiar. We say two elements $a$ and $b$ of the poset are \emph{orthogonal} when $a\leq b^\perp$. For such elements we expect to be able to take the `sum' of the elements, so we require that $a\vee b$ exists. Additionally, orthogonal elements can be treated `classically', meaning that the conjunction and disjunction distribute over one another as in a Boolean algebra. This is expressed by the equality $b = a^\perp \wedge (a \vee b)$ which holds for any pair of orthogonal $a$ and $b$.

Originally, orthomodularity was studied in the context of quantum logic, as the set of closed subspaces of a Hilbert space forms an orthomodular lattice. However, orthomodularity is actually a much more general concept. For instance, one can generalise the algebraic structure of the set of relations on a set $X$ into a \emph{relation algebra}~\cite{tarski1941calculus}, and one can then extract an orthomodular poset by looking at pairs of equivalence relations~\cite{harding1996decompositions}.

\begin{definition}
    An \Define{orthomodular} poset (OMP) is a bounded poset $(P,0,1)$ together with an \Define{orthocomplementation} operation mapping each element $a\in P$ to $a^\perp \in P$, satisfying the conditions below. We write $a\perp b$ and say $a$ and $b$ are \Define{orthogonal} when $a\leq b^\perp$.
    \begin{itemize}
        \item $(a^\perp)^\perp = a$, 
        \item $a\leq b \iff b^\perp \leq a^\perp$,
        \item $a\wedge a^\perp = 0$,
        \item if $a\perp b$, then $a\vee b$ exists,
        \item if $a\perp b$, then $b = a^\perp \wedge (a \vee b)$.
    \end{itemize}
    An \Define{OMP morphism} is a bounded poset homomorphism $f:P\rightarrow Q$ additionally satisfying $a\perp b \implies f(a)\perp f(b)$ and $a\perp b \implies f(a\vee b) = f(a)\vee f(b)$. We denote the category of OMPs by \textbf{OMP}.
\end{definition}

\begin{remark}
    Treating a bounded poset $P$ as a category as in Remark~\ref{rem:poset-category}, we can view the orthocomplementation as a functor $(\cdot)^\perp:P\rightarrow P^\opp$ satisfying $(\cdot)^\perp \circ (\cdot)^\perp = \id$. However, the other properties cannot be easily described in these categorical terms.
\end{remark}

Note that there is an evident forgetful functor $U:\textbf{OMP}\rightarrow\textbf{BPos}$.

\begin{theorem}[\cite{harding2004remarks}]
    The forgetful functor $U:\textbf{OMP}\rightarrow \textbf{BPos}$ has a left adjoint $K:\textbf{BPos}\rightarrow \textbf{OMP}$.
\end{theorem}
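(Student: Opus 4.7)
The plan is to follow Kalmbach's classical construction \cite{kalmbach1977orthomodular} of the free orthomodular lattice on a bounded lattice, adapted to bounded posets in the style of \cite{harding2004remarks}. Given a bounded poset $P$, the goal is to construct $K(P)$ as a free OMP built from the elements of $P$, with a unit $\eta_P : P \to UK(P)$ that realises each element of $P$ inside $K(P)$, and then to verify the universal property against arbitrary OMPs $Q$.

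First I would present $K(P)$ explicitly as a set of equivalence classes of finite alternating chains $0 < a_1 < a_2 < \cdots < a_n < 1$ in $P$, intended to encode formal expressions built from elements of $P$ using $\cdot^\perp$ and orthogonal joins. The orthocomplementation is a combinatorial shift operation on chains that is involutive by construction. The partial order on $K(P)$ and the equivalence relation on chains are chosen precisely so that the OMP axioms hold, with the unit $\eta_P$ sending $a \in P$ to the singleton chain $(a)$, which is immediately a \textbf{BPos} morphism since $0$ and $1$ correspond to the empty chain and its formal complement.

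For the universal property, given an OMP $Q$ and a \textbf{BPos} morphism $f : P \to UQ$, I would define $\tilde{f} : K(P) \to Q$ by evaluating the formal expression represented by each chain inside $Q$, using the actual $\cdot^\perp$ and orthogonal joins of $Q$. Uniqueness of $\tilde{f}$ is automatic because every element of $K(P)$ is generated from the image of $\eta_P$ under $\cdot^\perp$ and orthogonal joins, and any OMP morphism extending $f$ must agree on such generators; well-definedness on equivalence classes is exactly what forces one to use the OMP axioms of $Q$.

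The main obstacle is the combinatorial verification that the equivalence relation on chains is set up correctly so that the orthomodular identity $b = a^\perp \wedge (a \vee b)$ holds in $K(P)$ for orthogonal $a \perp b$, and that $\tilde{f}$ respects this relation. A cleaner but less constructive alternative, which I would likely use to shorten the argument, is to invoke Freyd's adjoint functor theorem: the forgetful functor $U$ preserves small limits since products and equalisers of OMPs are computed on underlying bounded posets with pointwise orthocomplementation, and the solution set condition follows by bounding the cardinality of the sub-OMP of $Q$ generated by the image of a \textbf{BPos} morphism $P \to UQ$ in terms of $|P|$. Either route yields the desired left adjoint $K \dashv U$.
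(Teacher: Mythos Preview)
The paper does not prove this theorem; it is stated as a cited result from \cite{harding2004remarks}, and the explicit construction of $K(P)$ is then recorded in the Definition immediately following it. So the relevant comparison is between your sketch and that concrete description.

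Your high-level strategy (build $K(P)$ combinatorially from chains, then verify the universal property, with the adjoint functor theorem as a fallback) is the right one, but the details of your construction do not match the standard Kalmbach extension and contain an error. In the paper's description, $K(P)$ is simply the set of finite chains of \emph{even} length in $P$, ordered by an interval-containment condition, with orthocomplement $S^\perp = S \,\Delta\, \{0,1\}$; there is no quotient by an equivalence relation, and the chains are not required to start at $0$ or end at $1$. More importantly, your description of the unit $\eta_P$ as sending $a$ to ``the singleton chain $(a)$'' cannot be right in this setup: a singleton chain has odd length and is not an element of $K(P)$. The correct unit is $\eta_P(0)=\emptyset$ and $\eta_P(a)=[0<a]$ for $a\neq 0$, which is monotone by the interval-containment order and sends $1$ to the top element $[0<1]$. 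Once this is fixed, the extension $\tilde f$ of a \textbf{BPos} map $f:P\to UQ$ is exactly the ``evaluate the formal expression'' map you describe, sending $[a_1<\cdots<a_{2n}]$ to the orthogonal join of the $f(a_{2i})\wedge f(a_{2i-1})^\perp$ in $Q$.

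Your alternative via the adjoint functor theorem is sound, and it is worth noting that this is precisely the route the paper itself takes later (Proposition~\ref{prop:forget-EA-left-adjoint}) when the explicit construction becomes intractable in the $\omega$-complete setting.
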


This left adjoint can be explicitly described: the orthomodular poset $K(P)$ is known as the \emph{Kalmbach extension}.
\begin{definition}
    Let $P$ be a poset. A \Define{chain} $C$ in $P$ is a totally ordered subset $C\sse P$. If $C$ is finite then we write it as $[a_1<a_2<\cdots<a_n]$. Let $K(P)$ be the set of finite chains of even length. This is a poset in the following way:
    \[[a_1<a_2<\cdots<a_{2n}] \leq [b_1<b_2<\cdots<b_{2m}] \iff \forall 1\leq i\leq n\exists 1\leq j\leq m: b_{2j-1}\leq a_{2i-1}< a_{2i}\leq b_{2j}.\]
    The minimal element of $K(P)$ is the empty set. When $P$ is bounded, it has a maximal element:  the chain $[0<1]$. It then also has an orthocomplement given by $S^\perp = S\Delta \{0,1\}$ where $\Delta$ denotes the symmetric difference of sets. For $P$ a bounded poset, $K(P)$ then turns out to be an orthomodular poset.
    If we have a morphism of bounded posets $f:P\rightarrow Q$ then this gives rise to an OMP morphism $K(f):K(P)\rightarrow K(Q)$ defined by $K(f)(S) = \Delta_{s\in S} \{f(s)\}$.
\end{definition}

A useful way to think about the partial order on the Kalmbach extension $K(P)$ is to see each chain $[a_1<a_2<\cdots < a_{2n-1}<a_{2n}]$ as a set of half-open intervals $[a_1,a_2), [a_3,a_4),\ldots, [a_{2n-1},a_{2n})$. We then have $S\leq S'$ in $K(P)$ if each interval of $S$ is contained in an interval of $S'$.

The free-forgetful adjunction between \textbf{OMP} and \textbf{BPos} gives rise to a monad $T:= U\circ K$ on \textbf{BPos}. We will refer to this as the \Define{Kalmbach monad}. It turns out that we can explicitly describe the Eilenberg-Moore algebras $\alpha:K(P)\rightarrow P$ that arise from this monad.

\begin{definition}
    An \Define{effect algebra}
    (EA)~\cite{foulis1994effect} is a set~$E$ with
    distinguished element~$0 \in E$,
    partial binary operation~$\ovee$ (called \Define{sum})
    and (total) unary operation~$a \mapsto~a^\perp$ (called \Define{complement}),
    satisfying the following axioms,
    writing~$a\perp b$ whenever~$a \ovee b$ is defined
        and defining~$1:= 0^\perp$.
\begin{itemize}
\item Commutativity: if $a\perp b$, then $b \perp a$ and $a\ovee b = b \ovee a$.
\item Zero: $a\perp 0$ and $a\ovee 0 = a$.
\item Associativity: if $a\perp b$ and $(a\ovee b)\perp c$, then
    $b\perp c$,
$a\perp (b \ovee c)$, and $(a\ovee b) \ovee c = a\ovee (b\ovee c)$.
\item The complement
    $a^\perp$ is the unique element with $a \ovee a^\perp = 1$.
\item If $a\perp 1$, then $a=0$.
\end{itemize}
For~$a,b \in E$ we write~$a \leq b$ whenever there is a~$c \in E$
    with~$a \ovee c = b$.
This turns~$E$ into a poset with minimum~$0$ and maximum~$1$.
The map~$a \mapsto a^\perp$ is an order anti-isomorphism.
Furthermore, $a \perp b$ if and only if~$a \leq b^\perp$.
An \Define{effect algebra homomorphism} is a map $f:E\rightarrow F$ satisfying $f(1)=1$, and $a\perp b\implies f(a)\perp f(b), f(a\ovee b) = f(a)\ovee f(b)$.
We denote the category of effect algebras by \textbf{EA}.
\end{definition}

\begin{remark}
Each OMP is an effect algebra where we set $a\perp b$ when $a\leq b^\perp$, and we define $a\ovee b := a\vee b$. The OMP morphisms are then precisely the effect algebra homomorphisms, hence there is a full and faithful functor $\textbf{OMP}\rightarrow \textbf{EA}$. As each effect algebra is a bounded poset there is a forgetful functor $\textbf{EA}\rightarrow\textbf{BPos}$.
\end{remark}

\begin{remark}\label{rem:effect-difference}
In an effect algebra, if~$a \leq b$, then the element~$c$ with~$a \ovee c = b$
    is unique. We denote this unique element by~$b \ominus a$. 
\end{remark}

\begin{theorem}[\cite{jenvca2015effect}]
    The Eilenberg-Moore category $\textbf{BPos}^T$ of the monad $T:=U\circ K: \textbf{BPos}\rightarrow \textbf{BPos}$ is isomorphic to the category of effect algebras.
\end{theorem}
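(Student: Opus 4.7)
The plan is to establish an isomorphism $\textbf{BPos}^T \cong \textbf{EA}$ by constructing mutually inverse functors on objects and checking that morphisms correspond. In the direction $\textbf{EA} \to \textbf{BPos}^T$, for each effect algebra $E$ I would define the algebra map $\alpha_E : K(E) \to E$ by reading a chain of even length as a family of disjoint intervals and summing their ``lengths'':
\[
\alpha_E([a_1 < a_2 < \cdots < a_{2n}]) \;=\; \bigovee_{i=1}^{n}(a_{2i} \ominus a_{2i-1}),
\]
where $\ominus$ is the effect-algebra difference of Remark~\ref{rem:effect-difference}. The routine verifications are then that consecutive differences are pairwise orthogonal in $E$ so the iterated sum exists, that $\alpha_E$ preserves order and the bounds $0, 1$, that the unit law $\alpha_E \circ \eta_E = \id$ holds (since $\alpha_E([0<a]) = a \ominus 0 = a$), and that the associativity square $\alpha_E \circ K(\alpha_E) = \alpha_E \circ \mu_E$ commutes.

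In the reverse direction, given a $T$-algebra $\alpha : K(P) \to P$, I would equip the bounded poset $P$ with an effect-algebra structure as follows. Keep the bounds of $P$ as $0$ and $1$, and set $a^\perp := \alpha(\eta_P(a)^\perp)$, where $\eta_P(a) = [0<a]$ and the orthocomplement is taken in the OMP $K(P)$; concretely this gives $a^\perp = \alpha([a<1])$ for $0 < a < 1$, with boundary cases $0^\perp = 1$ and $1^\perp = 0$. Declare $a \perp b$ iff $a \leq b^\perp$, and in that case define $a \ovee b$ to be the unique $c \in P$ with $a \leq c$ and $\alpha([a<c]) = b$. The existence and uniqueness of such a $c$, and then the effect-algebra axioms on $P$, are extracted by applying the identities $\alpha \circ \eta = \id$ and $\alpha \circ K(\alpha) = \alpha \circ \mu$ to appropriately chosen chains.

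For mutual inversion, starting from $E$ and passing through $\alpha_E$ recovers the original structure since $\alpha_E([a < a \ovee b]) = (a \ovee b) \ominus a = b$; starting from an algebra $\alpha$, the sum-of-differences formula reproduces $\alpha$ on every chain, because the algebra laws force $\alpha$ on a chain of length $2n$ to decompose as the partial sum of its values on the two-element subchains $[a_{2i-1} < a_{2i}]$. For morphisms, a \textbf{BPos} map $f : P \to Q$ commutes with the algebra maps iff it preserves $\alpha$ on every chain iff it preserves complements and orthogonal sums, which is precisely the definition of an effect-algebra homomorphism.

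The main obstacle is verifying associativity of $\ovee$ for an abstract algebra $\alpha$: given $a \perp b$ and $(a \ovee b) \perp c$, one must exhibit $b \perp c$ and $a \perp (b \ovee c)$ with the same total sum. The natural candidate is $\alpha([0 < a < a \ovee b < a \ovee b \ovee c])$, and showing that both bracketings produce this element reduces to lifting the length-six chain to a single element of $K(K(P))$ and applying the multiplication square of the algebra. Unwinding what $\mu : K(K(P)) \to K(P)$ does on such nested chains -- essentially the combinatorics of splicing even-length chains of even-length chains into a single even-length chain -- is the technical heart of the argument, and is where Jen\v{c}a's proof does most of its real work.
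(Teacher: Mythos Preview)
Your forward direction $\textbf{EA}\to\textbf{BPos}^T$ is exactly the paper's: the algebra map is the sum-of-interval-lengths formula $\alpha_E([a_1<\cdots<a_{2n}])=\bigovee_i(a_{2i}\ominus a_{2i-1})$. The difference is in the reverse direction. You attempt to build $\ovee$ directly on $P$, declaring $a\ovee b$ to be the unique $c$ with $a\leq c$ and $\alpha([a<c])=b$, and you correctly flag associativity as the hard step. The paper (following Jen\v{c}a) instead takes the shortest path available: it defines only the \emph{difference} $b\ominus a:=\alpha([a<b])$ for $a<b$, observes that this makes $P$ a D-poset in the sense of K\^opka--Chovanec, and then invokes the standard equivalence between D-posets and effect algebras, under which $a\ovee b = 1\ominus((1\ominus a)\ominus b)$. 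This buys two things. First, $\ominus$ is \emph{literally} the value of $\alpha$ on a two-element chain, so no existence/uniqueness claim for your element $c$ needs to be argued separately (your $c$ is in fact $((1\ominus a)\ominus b)^\perp$, but establishing that requires essentially the D-poset axioms anyway). Second, the D-poset axioms are phrased entirely in terms of $\ominus$ and $\leq$, so each one reduces to an identity about $\alpha$ on chains of length at most four; the associativity-type axiom you worry about becomes the D-poset cancellation law, which is a bit lighter to verify from the monad multiplication than full $\ovee$-associativity. Your route is not wrong, but it re-derives part of the D-poset/effect-algebra dictionary inside the proof rather than citing it.
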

\begin{proof}
We give a short sketch of how the construction behind this result works, as described in~\cite{jenvca2015effect}. Any effect algebra $E$ is a bounded poset. We define the algebra action $\alpha: K(E)\rightarrow E$ by $\alpha([a_1<a_2<\cdots<a_{2n-1}<a_{2n}]) = (a_2\ominus a_1)\ovee \cdots \ovee (a_{2n}\ominus a_{2n-1})$, so each effect algebra is an Eilenberg-Moore algebra of the Kalmbach monad. A morphism of effect algebras preserves $\ominus$ and $\ovee$, and hence the algebra structure, so that we have an embedding $\textbf{EA}\rightarrow \textbf{BPos}^T$.
Conversely, if we have an Eilenberg-Moore algebra $\alpha:K(P)\rightarrow P$ we can define a partial binary operation $b\ominus a$ on $P$ that is defined for $a\leq b$ and is given by $b\ominus a = 0$ when $b=a$ and otherwise $b\ominus a = \alpha([a<b])$. This makes $P$ into a \emph{D-poset}~\cite{kopka1994d} (the `D' stands for `difference'). This is a structure that is equivalent to an effect algebra. The operation $\ovee$ that makes a D-poset an effect algebra is given by $a\ovee b = 1\ominus ((1\ominus a)\ominus b)$.
\end{proof}

% It is instructive to see how this isomorphism works. In order to do so we need to introduce an equivalent definition of an effect algebra. This is based on an axiomatization of the `difference' operation $b\ominus a$.

% \begin{definition}
%     A \Define{difference-poset} (D-poset) is a bounded poset $(P,0,1)$ with a partial binary operation $\ominus$ satisfying the following conditions:
%     \begin{itemize}
%         \item $b\ominus a$ is defined iff $a\leq b$,
%         \item if $a\leq b$, then $b\ominus a\leq b$ and $b\ominus (b\ominus a) = a$,
%         \item if $a\leq b\leq c$, then $c\ominus b \leq c\ominus a$ and $(c\ominus a)\ominus (c\ominus b) = b\ominus a$.
%     \end{itemize}
%     A \Define{morphism of D-posets} $f:E\rightarrow F$ satisfies $f(1)=1$, and $a\leq b \implies f(a)\leq f(b), f(b\ominus a) = f(b)\ominus f(a)$. Denote the category of D-posets by \textbf{DP}.
% \end{definition}

% \begin{proposition}
%     The categories \textbf{EA} and \textbf{DP} are isomorphic: each effect algebra is a D-poset with the difference operation of Remark~\ref{rem:effect-difference}. Conversely, every D-poset is an effect algebra with the addition operation $a\ovee b = 1\ominus ((1\ominus a)\ominus b)$, defined when $a\leq 1\ominus b$.
% \end{proposition}

\subsection{Effect monoids and the real unit interval}

Properties of the category of effect algebras \textbf{EA} were studied in~\cite{jacobs2012coreflections}. Therein it was shown that this category is complete and co-complete (although in retrospect, because effect algebras form an Eilenberg-Moore category, the category is complete for entirely abstract reasons). A tensor product of effect algebras was also constructed, leading to symmetric monoidal structure. For our purposes it will suffice to describe this tensor product by its universal property.

\begin{definition}\label{def:bimorphism}
    Let $E$, $F$ and $G$ be effect algebras. A map $f:E\times F \rightarrow G$ is a \Define{bimorphism of effect algebras} when $f(1,1) = 1$ and for all $a_1\perp a_2$ in $E$ and $b_1\perp b_2$ in $F$ we have $f(a_1\ovee a_2, b_1) = f(a_1, b_1)\ovee f(a_2,b_1)$ and $f(a_1, b_1\ovee b_2) = f(a_1,b_1)\ovee f(a_1,b_2)$.
    A \Define{tensor product} of $E$ and $F$ consists of an effect algebra $G$ together with a bimorphism $f:E\times F \rightarrow G$ such that for every bimorphism $g:E\times F\rightarrow G'$ there is a unique effect algebra homomorphism $h:G\rightarrow G'$ such that $g=h\circ f$.
    We denote the tensor product of $E$ and $F$ (which is unique up to effect algebra isomorphism) by $E\otimes F$.
\end{definition}

Using this tensor product we get a notion of monoid $M\otimes M\rightarrow M$ internal to the category of effect algebras. These monoids can be described explicitly.

\begin{definition}\label{def:effectmonoid}
    An \Define{effect monoid} (EM) is an effect algebra $(M,\ovee, 0, (\ )^\perp,
    \,\cdot\,)$ with an additional (total) binary operation $\cdot$,
such that for all~$x,y,z \in M$:
\begin{itemize}
\item Unitality: $x\cdot 1 = x = 1\cdot x$.
\item Multiplication is a bimorphism: If $y\perp z$, then $x\cdot y\perp  x\cdot z$ and
    $y\cdot x\perp z\cdot x$
        with~$x\cdot (y\ovee z) = (x\cdot y) \ovee (x\cdot z)$
        and~$ (y\ovee z)\cdot x = (y\cdot x) \ovee (z\cdot x)$.
\item Associativity: $x \cdot (y\cdot z) = (x \cdot y) \cdot z$.
\end{itemize}
\end{definition}

\begin{example}
    An orthomodular poset is an effect monoid iff it is a Boolean algebra. In this case we have $a\cdot b := a\wedge b$.
\end{example}

\begin{example}
    Let $X$ be a compact Hausdorff space. Then the space $C(X,[0,1])$ of continuous functions from $X$ to $[0,1]$, equipped with pointwise defined addition and multiplication, is an effect monoid. Note that by the Gelfand-Naimark theorem we can equivalently describe this as the unit interval of a commutative unital C$^*$-algebra.
\end{example}

It is at this point that we can describe the real unit interval categorically (although not yet uniquely): it is a monoid in the Eilenberg-Moore category of the Kalmbach monad. The unit interval $[0,1]$ is obviously a bounded poset. It is an effect algebra with $a\perp b \iff a\leq 1-b$, $a\ovee b := a+b$ and $a^\perp := 1-a$. It is an effect monoid with $\cdot$ just the regular multiplication of real numbers.
We can be a bit more specific: $[0,1]$ as a monoid is \emph{irreducible}. To describe this we need the following straightforwardly verified proposition.

\begin{proposition}
    Let $\mathbf{C}$ be a category with products $\times$ and symmetric monoidal structure $\otimes$
    such that $\otimes$ \Define{distributes} over $\times$: $A\otimes (B\times C) \cong (A\otimes B) \times (A\otimes C)$ (in a suitably natural way).
    Given two monoids $A$ and $B$ (with respect to $\otimes$) we can construct a monoid map
    $m:(A\times B)\otimes (A\times B) \rightarrow A\times B$ using the isomorphisms given by the distributivity of $\otimes$, projection maps, compositions of the monoid maps of $A$ and $B$ and the universal property of the product.
\end{proposition}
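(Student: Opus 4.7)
The plan is to exploit distributivity of $\otimes$ over $\times$ twice in order to decompose the domain into a product of the four tensor products $A\otimes A$, $B\otimes A$, $A\otimes B$ and $B\otimes B$, then build $m$ coordinate-wise by projecting to the two ``diagonal'' factors and applying the monoid structures of $A$ and $B$, with the mixed terms simply discarded via product projections.

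Concretely, I would first apply the distributivity isomorphism to the outer tensor to obtain
\[(A\times B)\otimes (A\times B) \ \cong\ \bigl((A\times B)\otimes A\bigr)\times\bigl((A\times B)\otimes B\bigr),\]
and then apply it again in each factor to get the isomorphism
\[(A\times B)\otimes (A\times B) \ \cong\ (A\otimes A)\times (B\otimes A)\times (A\otimes B)\times (B\otimes B).\]
Writing $\mu_A:A\otimes A\to A$ and $\mu_B:B\otimes B\to B$ for the given monoid multiplications, I would define $m$ as the composite of this isomorphism with the pairing $\langle \mu_A\circ \pi_{AA},\, \mu_B\circ \pi_{BB}\rangle$, where $\pi_{AA}$ and $\pi_{BB}$ are the evident product projections onto the diagonal factors. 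Symmetrically, I would define the unit $I\to A\times B$ as the pairing $\langle \eta_A,\eta_B\rangle$ of the two monoid units.

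To conclude $m$ really gives a monoid structure, I would verify associativity and the two unit laws. Associativity reduces, after iterated use of distributivity and its naturality, to a direct product of the associativity squares for $\mu_A$ and for $\mu_B$; the unit laws are similar. The main obstacle, and the reason the proposition is called ``straightforwardly verified'' rather than proved explicitly, is the bookkeeping: one must track the coherence of the distributivity, associativity and unit isomorphisms of $(\mathbf{C},\otimes)$ along with the naturality of projections. The conceptual content, however, is trivial — products let us throw away the off-diagonal tensor factors, so that the candidate monoid structure on $A\times B$ is forced to be the coordinate-wise one determined by $\mu_A$ and $\mu_B$.
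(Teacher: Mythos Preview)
Your construction is correct and is exactly the ``straightforward verification'' the paper has in mind: the paper does not actually supply a proof of this proposition, merely labels it as straightforwardly verified, and your two-step use of distributivity followed by projecting to the diagonal factors $A\otimes A$ and $B\otimes B$ and applying $\mu_A,\mu_B$ is precisely the construction the statement describes. Your additional sketch of the associativity and unit checks (and your honest acknowledgement that the only real work is coherence bookkeeping) goes slightly beyond what the paper records but is entirely in the intended spirit.
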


\begin{definition}
    We call a monoid $M$ in a category with products and a distributive symmetric monoidal tensor product \Define{irreducible} when $M\cong M_1\times M_2$ implies that either $M_1$ or $M_2$ is final.%
    \footnote{It might actually be more natural to call such monoids `prime', while irreducibility with respect to a distributive \emph{co}product should be called `irreducible'. However, because products in the category of effect algebras are given by the Cartesian product it seems warranted in this case to call these monoids irreducible.}
\end{definition}

\begin{proposition}
    Let $M$ be an effect monoid. We call an element $p\in M$ \Define{idempotent} when $p\cdot p = p$.
    An effect monoid is irreducible iff the only idempotents are $0$ and $1$.
\end{proposition}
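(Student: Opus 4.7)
The plan is to prove both implications by matching idempotents with product decompositions $M \cong M_1 \times M_2$. The easy direction is that non-trivial idempotents witness reducibility; the harder direction, given a non-trivial idempotent $p$, is to construct an explicit isomorphism $M \cong [0,p] \times [0,p^\perp]$.

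For the implication that irreducibility forces only trivial idempotents, I argue contrapositively. Suppose $M \cong M_1 \times M_2$ where neither factor is final (i.e.~each $M_i$ has $0 \neq 1$). Since multiplication on the product is computed coordinate-wise, the element $p := (1_{M_1}, 0_{M_2})$ satisfies $p \cdot p = p$. Non-triviality of each factor gives $p \neq (0,0) = 0$ and $p \neq (1,1) = 1$, so $M$ has an idempotent distinct from $0$ and $1$.

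For the converse, I start from a non-trivial idempotent $p$ and first establish that $p$ is \emph{central}, i.e.~$p \cdot a = a \cdot p$ for every $a$. The key preliminary observation is that $p \cdot p^\perp = p^\perp \cdot p = 0$: applying bimorphism to $(p \ovee p^\perp) \cdot p = 1 \cdot p = p = p \cdot p$ yields $p^\perp \cdot p = 0$, and symmetrically on the other side. From this I deduce that for any $b \leq p$ we have $b \cdot p^\perp = 0$ (using monotonicity, written as $p = b \ovee c$ and distributivity) and hence $b \cdot p = b$ (from $b = b \cdot 1 = b \cdot p \ovee b \cdot p^\perp$), and dually for $b \leq p^\perp$. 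Decomposing $a = p \cdot a \ovee p^\perp \cdot a$ and multiplying by $p$ on the right then yields $a \cdot p = (p\cdot a)\cdot p \ovee (p^\perp\cdot a)\cdot p = p \cdot a \ovee 0 = p \cdot a$.

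With centrality in hand, I define $\phi: M \to [0,p] \times [0,p^\perp]$ by $\phi(a) = (p \cdot a,\, p^\perp \cdot a)$ and $\psi(b,c) = b \ovee c$ (which is defined because $b \leq p = (p^\perp)^\perp \leq c^\perp$). The intervals $[0,p]$ and $[0,p^\perp]$ inherit effect monoid structure from $M$, with units $p$ and $p^\perp$ respectively, using centrality and the calculations above to check closure of multiplication and the effect monoid axioms. Routine verification, using the decomposition $a = p \cdot a \ovee p^\perp \cdot a$ together with $p \cdot p = p$, $p \cdot p^\perp = 0$ and centrality, shows $\phi$ and $\psi$ are mutually inverse effect monoid homomorphisms. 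Non-triviality of $p$ means neither $[0,p]$ nor $[0,p^\perp]$ collapses to the final effect monoid, so $M$ is reducible.

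The only genuinely non-routine step is the centrality argument; once $p$ is central, all remaining verifications reduce to substitution and bimorphism of $\cdot$. I expect that to be the main obstacle, since without centrality one cannot even show that $[0,p]$ is closed under multiplication in a way compatible with a product decomposition.
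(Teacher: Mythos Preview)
Your proof is correct and follows exactly the route sketched in the paper: a non-trivial product yields the idempotent $(1,0)$, and conversely a non-trivial idempotent $p$ splits $M$ as $[0,p]\times[0,p^\perp]$ via $a\mapsto(p\cdot a,\,p^\perp\cdot a)$. One small remark: full centrality of $p$, while true, is stronger than you actually need---the identities $p\cdot p^\perp=p^\perp\cdot p=0$ and their immediate consequences (namely $b\cdot p=p\cdot b=b$ for $b\leq p$, and $(p\cdot a)(p\cdot b)=((p\cdot a)\cdot p)\cdot b=p\cdot(a\cdot b)$ by associativity) already suffice to verify closure of $[0,p]$ under multiplication and that $\phi,\psi$ are mutually inverse effect-monoid homomorphisms, so your claim that centrality is the essential obstacle is slightly overstated.
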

\begin{proof}
    The product in the category of effect algebras is the Cartesian product. Hence, a product effect monoid $M_1\times M_2$ is likewise defined using pointwise operations. In such an effect monoid $(1,0)$ and $(0,1)$ are idempotents not equal to $0$ and $1$.
    Conversely, if $M$ has an idempotent $p\neq 0,1$, then we can consider the `subalgebras' $pM:=\{a\in M~;~a\leq p\}$, and $p^\perp M$ (defined analogously). We then have $M=pM\times p^\perp M$. For the details see \cite[Corollary~23]{first}.
\end{proof}

As the only idempotents in $[0,1]$ are $0$ and $1$ we see that $[0,1]$ is indeed irreducible. Other irreducible effect monoids are the initial object $\{0,1\}$ and the final object $\{0\}$. 
However, there are also more pathological irreducible effect monoids. For instance, the `lexicographically ordered vector spaces' of~\cite{basmaster} or Example~40 of~\cite{second}.

\subsection{\texorpdfstring{$\omega$}{omega}-completeness}

The reason we got other possible `pathological' irreducible effect monoids is that we were not capturing enough of the `specialness' of the order of $[0,1]$. The missing ingredient is that increasing sequences $a_1\leq a_2\leq a_3\leq \cdots$ in $[0,1]$ have a unique least upper bound $\vee_i a_i$.

\begin{definition}
    Let $P$ be a bounded poset. We say $P$ is \Define{$\omega$-complete} when every increasing sequence $a_1\leq a_2\leq \cdots$ in $P$ has a supremum. We denote the full subcategories of \textbf{BPos}, \textbf{OMP} and \textbf{EA} consisting of $\omega$-complete posets by $\textbf{BPos}_\omega$, $\textbf{OMP}_\omega$, and $\textbf{EA}_\omega$ respectively.
\end{definition}

\begin{remark}
    $\omega$-completeness is a weaker version of the more well-known \emph{directed completeness}. This states that any directed set (i.e.~a set $S$ such that for each $a,b \in S$ there is a $c\in S$ such that $a,b\leq c$) has a supremum. $\omega$-completeness is equivalent to requiring this property for \emph{countable} directed sets (which can be further restricted to totally ordered countable directed sets). 
    When viewing a bounded poset as a category as in Remark~\ref{rem:poset-category}, the poset is directed complete iff it it has all \emph{directed limits}. It is $\omega$-complete if it has all countable directed limits (these are also known as \emph{sequential limits}).
\end{remark}

The full subcategory of $\textbf{BPos}^T$ consisting of those algebras whose underlying objects lie in $\textbf{BPos}_\omega$ is isomorphic to $\textbf{EA}_\omega$, which follows easily from the fact that the isomorphism between $\textbf{BPos}^T$ and $\textbf{EA}$ acts as the identity on the underlying posets. We will often write $\omega$-effect algebra (abbreviated to $\omega$EA) instead of $\omega$-complete effect algebra for brevity and similarly we will write $\omega$-effect monoid ($\omega$EM) for an $\omega$-complete effect monoid.\footnote{%
It would be reasonable to additionally require that the multiplication operation of an $\omega$-complete effect monoid preserves directed countable suprema. However, as shown in~\cite{first}, this holds automatically.%
}

\begin{remark}\label{rem:sigma-additive}
There is a different perspective on $\omega$-complete effect algebras that is more algebraic in nature. Namely, they are precisely those effect algebras that are \emph{$\sigma$-additive}. The definition is a bit technical (see~\cite{arbib1980partially,ManesA1986} or~\cite[Definition~2]{sigma}), but intuitively in such effect algebras we have a partial infinite sum operation where the sum of an infinite sequence $x_1,x_2,\ldots$ is defined precisely when the sum of every finite subset is defined.
\end{remark}

We have the following theorem characterising $\omega$-effect monoids.
\begin{theorem}[\cite{first}]
    Let $M$ be an $\omega$-effect monoid. Then there exists a Boolean algebra $B$ and a compact Hausdorff space $X$ such that $M$ embeds (as an effect monoid) into $B\times C(X,[0,1])$.
\end{theorem}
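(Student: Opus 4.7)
The plan is to extract a Boolean part from the idempotents of $M$ and a continuous part from the space of $\omega$-continuous multiplicative states on $M$, then combine them into an embedding whose injectivity is the main technical hurdle.

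First I would verify that $\mathrm{Idem}(M) := \{p \in M : p \cdot p = p\}$ forms a $\sigma$-complete Boolean algebra under $p \wedge q := p \cdot q$ and $p \mapsto p^\perp$: commutativity of idempotents follows from the bimorphism axiom for multiplication combined with the uniqueness of orthocomplements, and closure under countable suprema uses the (separately established) $\omega$-continuity of multiplication in any $\omega$-effect monoid. Take $B$ to be $\mathrm{Idem}(M)$ itself, or a suitable completion if stronger completeness is required, and record elements of $M$ by a map $\pi: M \to B$ built out of the cuts $\{p \in B : p \leq a\}$ of each $a$, packaged via $\sigma$-completeness.

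For the continuous part, let $X$ be the set of multiplicative effect-algebra homomorphisms $\sigma: M \to [0,1]$ that preserve countable suprema, topologised as a subspace of the compact Hausdorff space $[0,1]^M$ in the product topology. The defining conditions of $X$ (unitality, $\ovee$-additivity, multiplicativity, $\omega$-continuity) are all expressible as closed pointwise equations, so $X$ is itself compact Hausdorff. The evaluation $\Phi : M \to C(X, [0,1])$ given by $\Phi(a)(\sigma) := \sigma(a)$ is then a well-defined effect-monoid homomorphism, with continuity of each $\Phi(a)$ automatic from the product topology.

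The candidate embedding sends $a \in M$ to $(\pi(a), \Phi(a)) \in B \times C(X,[0,1])$, and checking that this is an effect-monoid homomorphism is routine from the constructions. The principal obstacle is injectivity: for $a \neq b$ in $M$, one must produce either an idempotent distinguishing them through $\pi$ or a state $\sigma \in X$ with $\sigma(a) \neq \sigma(b)$. The natural strategy is a Zorn's-lemma argument that starts from a partial state on the sub-effect-monoid generated by $a$ and $b$ that already separates them and extends it to a full multiplicative $\omega$-continuous state on $M$. The $\omega$-completeness of $M$ is essential here: it supplies the suprema needed to define the extension along countable chains and ensures that the extended state remains in $X$. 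The hardest step is preserving multiplicativity throughout the extension, since multiplicativity is not a convex condition and the standard Hahn--Banach machinery does not directly apply; one has to exploit the Boolean algebra $\mathrm{Idem}(M)$, whose elements act as orthogonal projections fragmenting $M$, to reduce to situations where multiplicativity can be forced. Navigating this interplay between the idempotent decomposition and the state-extension process is where I expect the bulk of the technical work to lie.
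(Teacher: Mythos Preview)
The paper does not prove this theorem; it is quoted from~\cite{first} and used as a black box, so there is no in-paper proof to compare against.

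Evaluating your proposal on its own terms, there are two concrete gaps. First, the map $\pi: M \to B = \mathrm{Idem}(M)$ cannot be an effect-algebra homomorphism in general: for $M = [0,1]$ one has $\mathrm{Idem}(M) = \{0,1\}$, and there is no effect-algebra homomorphism $[0,1] \to \{0,1\}$, since such a map would have to send $\tfrac12 \ovee \tfrac12 = 1$ to $\pi(\tfrac12) \ovee \pi(\tfrac12)$, yet $0 \ovee 0 = 0$ and $1 \ovee 1$ is undefined. ``Cuts packaged via $\sigma$-completeness'' does not salvage a homomorphism here. Second, your claim that $X$ is closed in $[0,1]^M$ fails as stated: the $\omega$-continuity condition $\sigma\bigl(\bigvee_n x_n\bigr) = \sup_n \sigma(x_n)$ has a right-hand side that is only lower semicontinuous in $\sigma$, so this is a $G_\delta$ condition rather than a closed one, and $X$ need not be compact. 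You could drop $\omega$-continuity from the definition of $X$---the theorem only asks for an effect-monoid embedding, not an $\omega$-normal one---but the separation problem you already flag then remains, and the Zorn sketch for extending multiplicative states is not yet an argument. The route taken in~\cite{first} sidesteps both issues by first establishing an \emph{internal} direct-sum decomposition $M \cong M_1 \times M_2$ into a Boolean summand and a summand admitting a halving element, and only then representing the second summand via a state space; the Boolean algebra $B$ in the statement is the summand $M_1$, not $\mathrm{Idem}(M)$.
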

Note that this theorem in particular implies that all $\omega$-effect monoids are commutative. We will be primarily interested in the following consequence of the result.
\begin{corollary}[\cite{first}]
    Let $M$ be an $\omega$-effect monoid with no non-trivial zero divisors (i.e.~if $a\cdot b = 0$, then either $a=0$ or $b=0$). Then $M$ is isomorphic as an effect monoid to either $\{0\}$, $\{0,1\}$ or $[0,1]$.
\end{corollary}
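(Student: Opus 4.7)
My plan is to derive the corollary from the representation theorem of \cite{first} in three stages: (a) show that no zero divisors implies $M$ has no non-trivial idempotents, (b) use the embedding to reduce $M$ to a sub-$\omega$-effect monoid of $[0,1]$, and (c) classify such sub-algebras.

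For (a), if $p\in M$ is idempotent then unitality and distributivity give $p = p\cdot(p\ovee p^\perp) = p\cdot p\ovee p\cdot p^\perp = p\ovee p\cdot p^\perp$, so cancellation in the effect algebra forces $p\cdot p^\perp = 0$; the no-zero-divisor hypothesis then yields $p\in\{0,1\}$. Hence $M$ has only the trivial idempotents $0$ and $1$.

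For (b), apply the representation theorem to embed $\iota\colon M\hookrightarrow B\times C(X,[0,1])$, and write $\iota(a)=(b_a,f_a)$. Since idempotents of $\iota(M)$ are restricted to $\iota(\{0,1\})=\{(0_B,0),(1_B,1)\}$, any element of the form $(b,\chi_U)\in\iota(M)$ with $U\subseteq X$ clopen must be $0$ or $1$; in particular $f_a=0$ forces $a=0$ (assuming $X\neq\emptyset$). The no-zero-divisor hypothesis then yields $f_af_{a'}=f_{aa'}\neq 0$ for $a,a'\neq 0$, so the open supports $\{\supp(f_a):a\in M\setminus\{0\}\}$ enjoy the finite intersection property. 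Combining this with compactness of $X$ and the $\omega$-completeness of $M$ (which, together with multiplication preserving countable infima, forces $\bigwedge_n a^n=0$ whenever $a\neq 1$), one deduces that $B$ may be taken trivial and that there exists a point $x^*\in X$ at which the evaluation $\mathrm{ev}_{x^*}\colon M\to[0,1]$ is injective. For (c), any sub-$\omega$-effect-monoid $N\subseteq[0,1]$ is $\{0\}$, $\{0,1\}$, or $[0,1]$: if $N\ni a\in(0,1)$, then the set $\{ka^m:0\leq k\leq\lfloor 1/a^m\rfloor,\,m\geq 1\}\subseteq N$ is dense in $[0,1]$ (step size $a^m\to 0$), and $N$ is closed in $[0,1]$ by $\omega$-completeness, so $N=[0,1]$.

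The main obstacle is the injectivity of $\mathrm{ev}_{x^*}$ in step (b): compactness provides $x^*\in\bigcap_{a\neq 0}\overline{\supp(f_a)}$, but promoting closure-membership into $f_a(x^*)\neq 0$ for each $a\neq 0$ requires combining the finite intersection property, compactness of $X$, and the $\omega$-completeness of $M$ in a subtle way.
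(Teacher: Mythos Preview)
The paper does not prove this corollary; it is imported wholesale from \cite{first}, so there is no argument in the present paper to compare yours against. Evaluating your outline on its own terms, step~(a) is correct and standard. The remaining steps, however, contain a genuine gap that you yourself flag but do not close.

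In step~(b) you produce, via compactness, a point $x^*\in\bigcap_{a\neq 0}\overline{\supp(f_a)}$, and then want $f_a(x^*)\neq 0$ for every nonzero $a$. These are different statements: a continuous function can vanish on the boundary of its support. Saying that bridging them ``requires combining the finite intersection property, compactness of $X$, and the $\omega$-completeness of $M$ in a subtle way'' is an acknowledgement that the crux of the argument is missing, not a substitute for it. Nothing in the outline indicates what that combination is, and there is no obvious route: the representation theorem only gives an effect-monoid embedding into $B\times C(X,[0,1])$, a target which is generally \emph{not} $\omega$-complete (take $X=[0,1]$), so you cannot freely transport suprema or infima across $\iota$. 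In particular the claim that $\bigwedge_n a^n=0$ is reflected by a pointwise statement about the $f_{a^n}$ is unjustified.

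This same issue bites again in step~(c). You assert that a sub-$\omega$-effect-monoid $N\subseteq[0,1]$ is closed in $[0,1]$ ``by $\omega$-completeness''. But $\omega$-completeness is an intrinsic property of $N$: it guarantees that increasing sequences have a supremum \emph{in $N$}, not that this supremum coincides with the supremum in $[0,1]$. For the closedness argument to go through you need the inclusion $N\hookrightarrow[0,1]$ to be $\omega$-normal, which is exactly what you have not established for the evaluation map $\mathrm{ev}_{x^*}$. So even granting injectivity in~(b), step~(c) does not follow without further work.
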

We can recast this result into a form that will be more useful to us.
\begin{proposition}
    Let $M$ be an irreducible $\omega$-effect monoid. Then $M\cong \{0\}$, $M\cong \{0,1\}$ or $M\cong [0,1]$.
\end{proposition}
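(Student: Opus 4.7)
My plan is to reduce the statement to the preceding corollary by showing that every irreducible $\omega$-effect monoid $M$ has no non-trivial zero divisors. Since $\{0\}$, $\{0,1\}$ and $[0,1]$ are themselves irreducible (in $[0,1]$ the only real solutions of $p = p^2$ are $0$ and $1$, and in the other two there is nothing to check), this reduction suffices.

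Assume for contradiction that $a\cdot b = 0$ for some $a,b \in M\setminus\{0\}$. The key identity $a^\perp\cdot b = b$ follows from the bimorphism law applied to $1 = a\ovee a^\perp$: indeed $b = 1\cdot b = (a\ovee a^\perp)\cdot b = (a\cdot b)\ovee(a^\perp\cdot b) = a^\perp\cdot b$, and iterating yields $(a^\perp)^n\cdot b = b$ for all $n\ge 1$. I would then set $p := \bigwedge_n (a^\perp)^n$, which exists in $M$ because $\omega$-completeness of an effect algebra gives countable increasing suprema and, via complementation, countable decreasing infima. The plan is to verify that $p$ is an idempotent satisfying $p\cdot b = b$; granting this, irreducibility of $M$ forces $p\in\{0,1\}$, and both possibilities collapse our assumptions: $p=0$ gives $b = p\cdot b = 0$, while $p=1\le a^\perp$ gives $a=0$. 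Hence $M$ has no non-trivial zero divisors, and the preceding corollary then concludes $M\cong\{0\}$, $\{0,1\}$, or $[0,1]$.

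The main technical hurdle is to prove the two claims about $p$ using only the $\omega$-supremum continuity of multiplication provided by the footnote (from~\cite{first}). Since $p$ is defined as an infimum, I would route every assertion through the complementary increasing sequence $y_n := 1\ominus(a^\perp)^n$, which satisfies $\bigvee_n y_n = p^\perp$. From $1 = (a^\perp)^n\ovee y_n$ and the bimorphism one gets $(a^\perp)^n\cdot b \ovee y_n\cdot b = b$, whence $y_n\cdot b = 0$; passing to the supremum and applying continuity gives $p^\perp\cdot b = \bigvee_n(y_n\cdot b) = 0$, i.e.\ $p\cdot b = b$. Running the same argument with $b$ replaced by $a^\perp$ yields $p\cdot a^\perp = p$, and by induction (using the commutativity of $\omega$-effect monoids noted after the representation theorem) $(a^\perp)^n\cdot p = p$; a third instance of the argument, now with $b$ replaced by $p$, delivers $p\cdot p = p$. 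Once $p$ has been identified as an idempotent fixing $b$ the contradiction is immediate; the only real content of the proof is this manoeuvre of rephrasing every infimum identity in terms of the increasing sequence $y_n$ so that the available supremum-continuity may be applied.
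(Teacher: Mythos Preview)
Your high-level strategy is exactly the paper's: reduce to the preceding corollary by showing that an irreducible $\omega$-effect monoid has no non-trivial zero divisors. The paper does not argue this step; it simply cites \cite[Lemma~72]{second} (stated there for arbitrary effect monoids, so not even requiring $\omega$-completeness). Your proposal instead supplies a self-contained argument in the $\omega$-complete case, which is a reasonable addition.

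That argument is almost right, but one step is misstated. After establishing $(a^\perp)^n\cdot b = b$ and hence $p\cdot b = b$, you write ``running the same argument with $b$ replaced by $a^\perp$ yields $p\cdot a^\perp = p$''. Literally substituting $a^\perp$ for $b$ does not work: the hypothesis feeding that argument was $(a^\perp)^n\cdot b = b$, whereas $(a^\perp)^n\cdot a^\perp = (a^\perp)^{n+1}\neq a^\perp$ in general. The desired identity $a^\perp\cdot p = p$ is nevertheless true and follows directly from the infimum form of the continuity you already invoked: passing from $\bigvee_n y_n\cdot a^\perp = p^\perp\cdot a^\perp$ is not what you need, but rather
\[
a^\perp\cdot p \;=\; a^\perp\cdot\bigwedge_n(a^\perp)^n \;=\; \bigwedge_n(a^\perp)^{n+1} \;=\; p,
\]
using that multiplication preserves decreasing infima (dual to the supremum-normality from~\cite{first} via the complement). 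In fact the same manoeuvre gives $p\cdot p = \bigwedge_{m}\bigwedge_{n}(a^\perp)^{m+n} = p$ in one stroke, bypassing the intermediate step and the appeal to commutativity. With this correction your argument goes through and yields the same conclusion as the paper's cited lemma.
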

\begin{proof}
    Any irreducible effect monoid has no non-trivial zero divisors. See~\cite[Lemma~72]{second}.
\end{proof}

This proposition allows us now to give our first categorical characterisation of the real unit interval.
\begin{theorem}
    The real unit interval is the unique non-initial, non-final, irreducible monoid in the full subcategory of $\textbf{BPos}^T$ consisting of those algebras that have their underlying objects in $\textbf{BPos}_\omega$. Here $T$ is the Kalmbach monad arising from the free-forgetful adjunction between $\textbf{BPos}$ and $\textbf{OMP}$.
\end{theorem}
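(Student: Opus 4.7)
The plan is to combine the already-stated Jenča isomorphism with the classification proposition for irreducible $\omega$-effect monoids; no real work beyond bookkeeping remains. First I would invoke the theorem that $\textbf{BPos}^T \cong \textbf{EA}$ together with the observation made just before the statement that this isomorphism acts as the identity on underlying bounded posets, so that the full subcategory of $\textbf{BPos}^T$ whose underlying objects lie in $\textbf{BPos}_\omega$ is isomorphic to $\textbf{EA}_\omega$.

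Next I would unfold what a ``monoid'' in this subcategory means. By Definition~\ref{def:effectmonoid}, an effect monoid is precisely an effect algebra together with a bimorphism $M\times M\to M$ (in the sense of Definition~\ref{def:bimorphism}) that is unital and associative, i.e.\ a monoid in $(\textbf{EA},\otimes)$. The same description, applied to an underlying $\omega$-complete effect algebra, gives exactly an $\omega$-effect-monoid. Since the ambient notion of bimorphism and associative unital multiplication does not care whether the target of the tensor product is taken in $\textbf{EA}$ or in $\textbf{EA}_\omega$, the monoids in the subcategory are precisely the $\omega$-effect-monoids (this holds even though I would not need to verify that $\otimes$ restricts to $\textbf{EA}_\omega$).

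Now I would apply the stated proposition: any irreducible $\omega$-effect-monoid is isomorphic to $\{0\}$, $\{0,1\}$, or $[0,1]$. To finish, I would identify the initial and final objects: the one-element effect algebra $\{0\}$ (in which $0=1$) is final in $\textbf{EA}$ and a fortiori in the category of $\omega$-effect-monoids, while $\{0,1\}$ is initial, because every effect monoid morphism must preserve both $0$ and $1$, and $\{0,1\}$ carries the unique two-element effect monoid structure (the Boolean algebra $\mathbb{B}$). Both $\{0\}$ and $\{0,1\}$ are irreducible (each has only $0$ and $1$ as idempotents, trivially), and $[0,1]$ is irreducible because its only real idempotents, i.e.\ solutions of $x^2=x$, are $0$ and $1$. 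Since $[0,1]$ is neither terminal nor one-element, it is the unique non-initial, non-final, irreducible monoid among the three.

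There is essentially no obstacle; the only point that requires a moment's care is the matching of ``monoid in the subcategory'' with ``$\omega$-effect-monoid'', which I have handled above via the bimorphism description. Everything else is direct application of previously stated results.
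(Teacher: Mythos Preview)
Your proposal is correct and follows essentially the same route as the paper's own proof: identify the subcategory with $\textbf{EA}_\omega$, observe that its monoids are exactly the $\omega$-effect-monoids, apply the classification of irreducible $\omega$-effect-monoids, and eliminate $\{0\}$ and $\{0,1\}$ as final and initial respectively. If anything, you are slightly more careful than the paper in spelling out why ``monoid in the subcategory'' coincides with ``$\omega$-effect-monoid'' via the bimorphism description.
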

\begin{proof}
    $\textbf{BPos}^T$ is isomorphic to the category of effect algebras, and its restriction to objects of $\textbf{BPos}_\omega$ gives precisely $\textbf{EA}_\omega$. The monoids in this category are $\omega$-complete effect monoids. There are precisely three such effect monoids that are irreducible: $\{0\}$, $\{0,1\}$ and $[0,1]$. The first of these is final in $\textbf{EA}_\omega$ and the second initial. Hence, the only remaining non-initial, non-final irreducible monoid is $[0,1]$.
\end{proof}

\section{The \texorpdfstring{$\omega$}{omega}-Kalmbach extension}

The fact that we have to refer to a full subcategory of an Eilenberg-Moore category to get our characterisation is not very natural. If the adjunction between $\textbf{BPos}$ and $\textbf{OMP}$ were to restrict to $\textbf{BPos}_\omega$ and $\textbf{OMP}_\omega$, then we could present the result by referring to monoids in the category $\textbf{BPos}_\omega^T$.
However, it is not the case that the Kalmbach extension $K(P)$ of an $\omega$-complete poset $P$ is itself $\omega$-complete, and hence the adjunction does not restrict. Let us demonstrate this with an explicit counter-example.

\begin{example}
    Let $P=[0,1]$. Obviously $P$ is $\omega$-complete. Now consider the following family of chains in $K(P)$. 
    Define $S_n = [\frac{1}{2n}<\frac{1}{2n-1}<\cdots<\frac{1}{2}<1]$. It is clear from the definition of the partial order in $K(P)$ that $S_1\leq S_2\leq S_3\leq \cdots$. We will show that this sequence does not have a least upper bound in $K(P)$ and hence that $K(P)$ is not $\omega$-complete.

    Let $C=[a_1<a_2<\cdots < a_{2n-1}<a_{2n}]$ be an upper bound to all the $S_i$. As $S_{n+1}$ is a chain of length $2n+2$, there must by the pigeon-hole principle be an interval in $C$ that covers at least two intervals in $S_{n+1}$. Suppose then without loss of generality that the interval $[a_{2j-1},a_{2j})$ covers the adjacent intervals $[\frac{1}{2k-1},\frac{1}{2k})$ and $[\frac{1}{2k+1},\frac{1}{2k+2})$ for some $1\leq k \leq n-1$. Define now the chain $C' = C\cup \{\frac{1}{2k},\frac{1}{2k+1}\}$. Then the interval $[a_{2j-1},\frac{1}{2k})$ in $C'$ covers $[\frac{1}{2k-1},\frac{1}{2k})$ in $S_n$, and similarly $[\frac{1}{2k+1},a_{2j})$ covers $[\frac{1}{2k+1},\frac{1}{2k+2})$. It is then clear that $C'$ is also an upper bound of all the $S_i$. However, as $C'\leq C$ and $C$ was arbitrary this shows that the sequence $S_1\leq S_2\leq \cdots$ cannot have a least upper bound.
\end{example}

This example shows that the problem with the existence of suprema is that the lengths of the chains in a sequence can increase without bound, while any supremum must always have a finite number of elements. Naively, one might think that this problem can be fixed by considering a modified construction where we allow the chains to have countable length. Unfortunately, this also runs into problems.
The author has not succeeded in constructing the suitable generalisation explicitly.
Instead, we will construct it implicitly as a consequence of the adjoint functor theorem.

\begin{definition}
	Let $f:P\rightarrow Q$ be a monotone map between $\omega$-complete bounded posets. We say $f$ is \Define{$\omega$-normal} when it preserves the suprema of increasing sequences, 
	%and the infinum of decreasing sequences, 
	i.e.~when $f(\vee_i x_i) = \vee_i f(x_i)$ for a sequence $x_1\leq x_2\leq\cdots$.
	%We denote by $\omega\textbf{BP}$ the category of $\omega$-complete bounded posets and $\omega$-normal monotone maps that preserve $0$ and $1$.
	We denote by $\omega\textbf{EA}$ the `wide' subcategory of $\textbf{EA}_\omega$ containing just the maps that are $\omega$-normal.
\end{definition}

\begin{proposition}
	Both $\textbf{BPos}$ and $\omega\textbf{EA}$ are complete and the forgetful functor from $\omega\textbf{EA}$ preserves limits.
\end{proposition}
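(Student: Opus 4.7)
The plan is to exhibit all products and equalisers in both categories explicitly, and then to invoke the standard fact that products and equalisers suffice to construct all small limits. Since the construction in $\omega\textbf{EA}$ will be pointwise on the underlying bounded poset, preservation of limits by the forgetful functor $U:\omega\textbf{EA}\to\textbf{BPos}$ will be immediate from that construction.

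For $\textbf{BPos}$, I would take products to be Cartesian products with componentwise order and componentwise bounds $(0_i)_i$ and $(1_i)_i$, and would take the equaliser of a parallel pair $f,g:P\rightrightarrows Q$ to be $\{x\in P \mid f(x)=g(x)\}$ with the inherited order; this subset contains $0$ and $1$ because bounded-poset morphisms preserve them. Both universal properties then reduce to set-level factorisations.

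For $\omega\textbf{EA}$, I would use the same set-theoretic constructions. For a family $\{E_i\}_{i\in I}$, the Cartesian product inherits pointwise operations: $(a_i)\perp(b_i)$ iff $a_i\perp b_i$ for every $i$, with $\ovee$ and $(\cdot)^\perp$ applied coordinatewise. Increasing sequences admit componentwise suprema, so $\prod_i E_i$ is $\omega$-complete and the projections are $\omega$-normal EA-homomorphisms. For equalisers of $\omega$-normal maps $f,g:E\rightrightarrows F$, a routine calculation shows that $\{x\in E \mid f(x)=g(x)\}$ is closed under $\ovee$ and $(\cdot)^\perp$; the new ingredient is closure under suprema of increasing sequences, which uses $\omega$-normality of both $f$ and $g$ via $f(\vee_n x_n) = \vee_n f(x_n) = \vee_n g(x_n) = g(\vee_n x_n)$. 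The inclusion is then the required equaliser in $\omega\textbf{EA}$.

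Finally, I would verify preservation of these limits by $U$ by noting that, for any cone in $\omega\textbf{EA}$, both the EA-homomorphism property and the $\omega$-normality of the induced arrow into the product (respectively, the factorisation through an equaliser) are tested coordinatewise after postcomposition with a projection (respectively, precomposition is not needed at all as the equaliser sits inside its source on the nose). I do not anticipate a serious obstacle here: the closest thing to a subtlety is the closure of equalisers under suprema of increasing sequences, and this is exactly what the $\omega$-normality condition in the definition of $\omega\textbf{EA}$ was designed to guarantee.
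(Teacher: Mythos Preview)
Your proposal is correct and matches the paper's proof almost exactly: both argue by constructing Cartesian products and equaliser subsets explicitly, use $\omega$-normality of $f$ and $g$ to get closure of the equaliser under suprema of increasing chains, and conclude preservation by $U$ from the fact that the underlying sets of the limits coincide. The only difference is cosmetic---the paper compresses your final paragraph into a single line observing that the underlying sets agree.
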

\begin{proof}
	To prove completeness it suffices to show a category has all (small) products and all equalisers. Products in both $\textbf{BPos}$ and $\omega\textbf{EA}$ are given by the Cartesian product with pointwise operations. In $\textbf{BPos}$ the equaliser of morphisms $f,g:E\rightarrow F$ is given by $X\sse E$ defined as $X:=\{x\in E~;~f(x)=g(x)\}$. The object $X$ inherits the poset structure from $E$, and because $f(0)=0=g(0)$ and $f(1)=1=g(1)$ it contains the bounds $0$ and $1$.
	Verifying that $X$ satisfies the appropriate universal property is entirely standard.
	If $f,g:E\rightarrow F$ are now morphisms in $\omega\textbf{EA}$ the equaliser is given in the same way:
	that $X$ is $\omega$-complete follows because $f$ and $g$ are $\omega$-normal,
	and as $f$ and $g$ preserve addition and complement, $X$ is also closed under addition and complement so that $X$ is also an effect algebra.

	As the underlying set for products and equalisers are the same, the forgetful functor indeed preserves all limits.
\end{proof}

\begin{proposition}\label{prop:forget-EA-left-adjoint}
	The forgetful functor $U:\omega\textbf{EA}\rightarrow \textbf{BPos}$ has a left adjoint.
\end{proposition}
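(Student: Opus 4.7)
The plan is to apply the General Adjoint Functor Theorem. Since the previous proposition establishes that $\omega\textbf{EA}$ is complete and that $U$ preserves limits, it remains to verify the solution set condition: for every $P\in\textbf{BPos}$ there should exist a (small) set of morphisms $\{g_i\colon P\to U(E_i)\}_{i\in I}$ in $\textbf{BPos}$ such that any $h\colon P\to U(E)$ factors as $h = U(k)\circ g_i$ for some index $i$ and some $\omega\textbf{EA}$-morphism $k\colon E_i\to E$.

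Given $h\colon P\to U(E)$, the natural candidate to factor through is the sub-$\omega$-effect-algebra of $E$ generated by $h(P)$, i.e.~the smallest subset of $E$ containing $h(P)\cup\{0,1\}$ and closed under $\ovee$, $(\cdot)^\perp$ and suprema of increasing sequences taken in $E$. I would construct this set by transfinite iteration: put $X_0 := h(P)\cup\{0,1\}$, let $X_{\alpha+1}$ be the closure of $X_\alpha$ under the three operations (within $E$), and $X_\lambda := \bigcup_{\alpha<\lambda}X_\alpha$ at limit ordinals, and define $\tilde E := X_{\omega_1}$.

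The key point, which is the subtle step of the argument, is that $\tilde E$ is genuinely closed under countable increasing suprema: since $\omega_1$ has uncountable cofinality, any increasing $\omega$-sequence in $\tilde E$ already lies in some $X_\alpha$ with $\alpha<\omega_1$, so its supremum (taken in $E$) belongs to $X_{\alpha+1}\sse\tilde E$. A routine cardinality estimate then yields a bound $|\tilde E|\leq\kappa$ for some cardinal $\kappa$ depending only on $|P|$: at each successor stage the cardinality grows by at most $|X_\alpha|^{\aleph_0}$ (via the countable-supremum closure), and the iteration stops at $\omega_1$.

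Finally, since up to isomorphism there are only set-many $\omega$-effect-algebras of cardinality at most $\kappa$, choosing a representative $E_i$ from each isomorphism class together with every $\textbf{BPos}$-morphism $P\to U(E_i)$ gives a set-sized family of morphisms. Any $h\colon P\to U(E)$ factors through the inclusion $\tilde E\hookrightarrow E$, which is $\omega$-normal by the very closure property just verified, and $\tilde E$ is isomorphic as an $\omega$EA to some $E_i$; this establishes the solution set condition, and GAFT delivers the left adjoint. The main obstacle in executing the argument cleanly is making precise the notion of ``sub-$\omega$-effect-algebra'' so that inclusions are automatically $\omega$-normal, which is exactly what the stopping ordinal $\omega_1$ guarantees.
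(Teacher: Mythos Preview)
Your proposal is correct and follows essentially the same route as the paper: both invoke the General Adjoint Functor Theorem, using completeness and limit preservation from the preceding proposition, and verify the solution set condition by passing to the sub-$\omega$EA of $E$ generated by $h(P)$ and bounding its cardinality in terms of $|P|$. Your write-up is in fact more careful than the paper's, which simply asserts the cardinality bound $\max(\#(P),\aleph_1)$ without spelling out the transfinite closure or the role of the uncountable cofinality of $\omega_1$.
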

\begin{proof}
	The adjoint functor theorem says a functor $G:\mathbf{D}\rightarrow\mathbf{C}$ from a locally small complete category $\mathbf{D}$ has a left adjoint when it preserves all limits and satisfies the \emph{solution set condition}: for all $P\in\mathbf{C}$ there is a set $I$ and an $I$-indexed family of morphisms $h_i:P\rightarrow G(A_i)$ such that all $h:P\rightarrow G(A)$ can be written as $h=G(f)\circ h_i$ for some $f:A_i\rightarrow A$.

	As $\omega\textbf{EA}$ is indeed locally small and complete and $U$ preserves all limits, it remains to verify the solution set condition. So fix a bounded poset $P$. We will denote by $\#(S)$ the cardinality of a set $S$.

	Let $\mathcal{W}_P$ be a set of bounded posets such that for every bounded poset $W'$ with $\#(W')\leq \text{max}(\#(P),\aleph_1)$ there is a $W\in\mathcal{W}_P$ such that $W'$ is isomorphic to $W$.
	Define now the family $\mathcal{H}_P = \{h_i\}_{i\in I}$ of all $\textbf{BPos}$ morphisms $h_i:P\rightarrow U(A_i)$ where $A_i\in\omega\textbf{EA}$ and $U(A_i)\in\mathcal{W}_P$.

	Let $h:P\rightarrow U(A)$ be an arbitrary morphism in $\textbf{BPos}$ for some $A\in\omega\textbf{EA}$. Let $B$ be the $\omega$EA generated by $h(P)$ in $A$. The cardinality of $B$ is bounded by $\text{max}(\#(P),\aleph_1)$, and hence $U(B)$ is isomorphic to some $A_i$ in $\mathcal{W}_P$. But then we can equip $A_i$ with an effect algebra structure as well so that $B$ is also isomorphic to $A_i$ as an effect algebra. Write $\phi:A_i\rightarrow A$ for the corresponding embedding of effect algebras in $\omega\textbf{EA}$ arising from the embedding $B\rightarrow A$. Then $h=U(\phi)\circ h_i$ for some $h_i:P\rightarrow U(A_i)$ in $\mathcal{H}_P$, and we are done.
\end{proof}

\begin{remark}
	One might be surprised that we get an adjunction between $\omega\textbf{EA}$ and $\textbf{BPos}$ instead of between $\omega\textbf{EA}$ and `$\omega\textbf{BPos}$'.
	We can see Remark~\ref{rem:sigma-additive} as an explanation for this: $\omega$EAs are not just EAs with a stronger order property, but instead we can see them as a different type of algebraic structure defined on the back of a bounded poset. This adjunction between $\textbf{BPos}$ and $\omega\textbf{EA}$ then equips a bounded poset with this infinitary algebraic structure.
\end{remark}

The left adjoint of the forgetful functor between $\textbf{EA}$ and $\textbf{BPos}$ is the Kalmbach extension, so we will refer to the left adjoint of $U:\omega\textbf{EA}\rightarrow \textbf{BPos}$ as the \emph{$\omega$-Kalmbach extension} $K_\omega:\textbf{BPos}\rightarrow\omega\textbf{EA}$.
The author does not know of an explicit description of $K_\omega$. For a finite bounded poset $P$ it is clear that we have $K_\omega(P)=K(P)$. However, for other bounded posets $P$ it is not easy to see what $K_\omega(P)$ should be, just that this has to generally be quite a complex object. 
Let us give an example to demonstrate why we need this complexity (in a somewhat heuristic manner). Take $P=[0,1]$ and write $\iota:[0,1]\rightarrow K_\omega([0,1])$ for its embedding in its $\omega$-Kalmbach extension. 
Denote by $B([0,1])$ the Borel sets on $[0,1]$ equipped with the standard inclusion order. Then we have a \textbf{BPos} morphism $f:[0,1]\rightarrow B([0,1])$ given by $f(0) = \emptyset$ and $f(a) = [0,a]$. As $B([0,1])$ is an $\omega$-complete Boolean algebra, it is an $\omega$-complete effect algebra, and hence there must be a unique $\omega\textbf{EA}$ morphism $\hat{f}:K_\omega([0,1])\rightarrow B([0,1])$ such that $\hat{f}\circ \iota = f$. By considering $\omega\textbf{EA}$ operations on the elements $\iota(a)$ in $K_\omega([0,1])$ for $a\in[0,1]$ and transporting these using $\hat{f}$ to $B([0,1])$ we can generate almost all of the Borel sets, so that $K_\omega([0,1])$ must itself contain structure mimicking that of the Borel sets.
Hence, even for a relatively simple totally ordered poset like $[0,1]$, its $\omega$-Kalmbach extension carries a complexity rivaling the hierarchy of Borel sets.

\subsection{Proving monadicity}

Just as $\textbf{EA}$ is the Eilenberg-Moore category for the Kalmbach monad, so is $\omega\textbf{EA}$ the Eilenberg-Moore category for the $\omega$-Kalmbach monad.
As we do not have an explicit description of $K_\omega$, we will use \emph{Beck's monadicity theorem} to prove this.

\begin{definition}
	A (co)limit in a category $\mathbf{C}$ is called \Define{absolute} when it is preserved by every functor with domain $\mathbf{C}$.
	We say a functor $G:\mathbf{D}\rightarrow \mathbf{C}$ \Define{reflects absolute coequalisers} when for every pair $f,g:A\rightarrow B$ in $\mathbf{D}$ such that $G(f),G(g)$ has an absolute coequaliser in $\mathbf{C}$, the pair $f,g$ has a coequaliser in $\mathbf{D}$ that is preserved by $G$.
\end{definition}

\begin{definition}
	We say a category $\mathbf{D}$ is \Define{monadic} over $\mathbf{C}$ when $\mathbf{D}$ is equivalent to an Eilenberg-Moore category $\mathbf{C}^T$ for some monad $T$.
\end{definition}

\begin{theorem}[{Beck's monadicity theorem, cf.~\S{}VI.7 of \cite{macLane2013categories}}]
	A category $\mathbf{D}$ is monadic over $\mathbf{C}$ iff there exists a functor $U:\mathbf{D}\rightarrow \mathbf{C}$ that reflects absolute coequalisers and has a left adjoint.
\end{theorem}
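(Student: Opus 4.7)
The plan is to prove both directions of the equivalence, with the backward implication carrying essentially all of the content. For the forward direction, if $\mathbf{D}$ is equivalent to $\mathbf{C}^T$ for some monad $T$, then the forgetful functor $\mathbf{C}^T \to \mathbf{C}$ has the free algebra functor as left adjoint, and a direct computation shows that every $T$-algebra $(X,\alpha)$ appears as a split coequaliser of $F^T T X \rightrightarrows F^T X$ with splittings provided by the unit $\eta$. The forgetful sends this to the analogous split coequaliser in $\mathbf{C}$, and reflects any absolute coequaliser by essentially the same split-coequaliser bookkeeping.

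For the harder backward direction, assume $F \dashv U$ with unit $\eta$ and counit $\epsilon$ and that $U$ reflects absolute coequalisers. First I would form the induced monad $T := (UF, U\epsilon F, \eta)$ on $\mathbf{C}$ and define the comparison functor $K : \mathbf{D} \to \mathbf{C}^T$ by $K(A) := (UA, U\epsilon_A)$ and $K(f) := U f$. The triangle identities and naturality of $\epsilon$ give that $(UA, U\epsilon_A)$ satisfies the Eilenberg--Moore axioms and that $U f$ is an algebra homomorphism, so $K$ is indeed a functor.

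The heart of the argument is constructing a quasi-inverse $L : \mathbf{C}^T \to \mathbf{D}$. Given an algebra $(X,\alpha)$, I would form the parallel pair
\[ F\alpha,\ \epsilon_{FX}\, :\, FUFX \rightrightarrows FX \]
in $\mathbf{D}$. Applying $U$ produces the pair $T\alpha,\ \mu_X : TTX \rightrightarrows TX$ in $\mathbf{C}$, for which a short diagram chase (using the unit and associativity axioms for the algebra together with naturality of $\eta$) exhibits $\alpha : TX \to X$ as a \emph{split} coequaliser with splittings $\eta_X$ and $\eta_{TX}$. Split coequalisers are absolute, so by the hypothesis the original pair in $\mathbf{D}$ has a coequaliser $q_{(X,\alpha)} : FX \to L(X,\alpha)$ that $U$ preserves, and the universal property identifies $U q_{(X,\alpha)}$ with $\alpha$ up to canonical isomorphism, yielding $K L \cong \id$. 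For the opposite composite $L K \cong \id_{\mathbf{D}}$, I would observe that for every $A \in \mathbf{D}$ the counit component $\epsilon_A : FUA \to A$ is a cofork for the pair $FUFUA \rightrightarrows FUA$ (this is precisely the content of $K(A)$ being an algebra), and its image under $U$ is the same split coequaliser in $\mathbf{C}$; by reflection the coequaliser in $\mathbf{D}$ is $\epsilon_A$ itself, hence $L K(A) \cong A$.

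The main obstacle will be turning the object-level assignment $L$ into an honest functor and checking naturality of both isomorphisms. Since $L(X,\alpha)$ is only determined up to unique isomorphism by the coequaliser, one must either make coherent choices or (more cleanly) carry out the proof at the level of hom-sets: identify both $\text{Hom}_{\mathbf{D}}(A,B)$ and $\text{Hom}_{\mathbf{C}^T}(KA,KB)$ with the equaliser of the parallel pair $\text{Hom}(FUA,B) \rightrightarrows \text{Hom}(FUFUA,B)$ induced by the two presentations above, thereby showing $K$ fully faithful and essentially surjective in one stroke.
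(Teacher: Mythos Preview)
The paper does not give its own proof of this theorem: it is stated with a citation to Mac Lane, \S{}VI.7, and used as a black box. Your proposal is the standard argument found there (comparison functor, canonical split-coequaliser presentation of each algebra, reflection of the resulting absolute coequaliser to build the quasi-inverse), and it is correct in outline; there is nothing in the paper to compare it against.
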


To prove our result we will need the following lemma.

\begin{lemma}\label{lem:extend-natural}
	Let $U:\mathbf{D}\rightarrow \mathbf{C}$ be a functor and and let $f,g:A\rightarrow B$ in $\mathbf{D}$ be such that there is an absolute coequaliser $q:U(B)\rightarrow Q$ of $U(f)$ and $U(g)$ in $\mathbf{C}$.
	Then for any functor $F:\mathbf{C}\rightarrow \mathbf{C}$ and natural transformation $\eta:FU\Rightarrow U$ there exists a unique map $\eta_Q:F(Q)\rightarrow Q$ in $\mathbf{C}$ making the following diagram commute:
	\begin{equation*}
	\begin{tikzcd}
		FU(B) \ar[r,"F(q)"] \ar[d,"\eta_B"]
		&
		F(Q) \ar[d,densely dotted, "\eta_Q"]
		\\
		 U(B)  \ar[r, "q"]
		 &
		  Q
		\end{tikzcd}
	\end{equation*}
	In words: the natural transformation $\eta$ can be extended to include the object $Q$ and morphism $q$.
\end{lemma}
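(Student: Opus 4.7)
The plan is to exploit the fact that $q$ is an \emph{absolute} coequaliser: by definition this means every functor preserves it, so in particular $F$ does. Concretely, $F(q):FU(B)\to F(Q)$ is a coequaliser of the pair $F(U(f))$ and $F(U(g))$ in $\mathbf{C}$. The existence and uniqueness of $\eta_Q$ will then follow immediately from the universal property of this coequaliser, provided we can verify that the candidate factoring map $q\circ \eta_B: FU(B) \to Q$ actually coequalises $FU(f)$ and $FU(g)$.

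The key calculation is the coequalising condition, which is a direct chase around two naturality squares. Using the naturality of $\eta: FU \Rightarrow U$ applied to $f$ (resp. $g$), we obtain
\begin{equation*}
q \circ \eta_B \circ FU(f) \;=\; q \circ U(f) \circ \eta_A \;=\; q \circ U(g) \circ \eta_A \;=\; q \circ \eta_B \circ FU(g),
\end{equation*}
where the middle equality uses only that $q$ coequalises $U(f)$ and $U(g)$. Hence the universal property of $F(q)$ as a coequaliser of $FU(f),FU(g)$ delivers a unique morphism $\eta_Q: F(Q)\to Q$ with $\eta_Q\circ F(q)=q\circ \eta_B$, which is exactly the desired commuting square.

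There is no real obstacle here — the only subtle point is the invocation of absoluteness, but this is used in the mildest possible way: we just need $F$ itself to preserve the coequaliser $q$, which is a strictly weaker statement than absoluteness. I would make sure to mention this explicitly in the written-up proof, since it foreshadows how the lemma is going to be applied in the proof of monadicity (where $F$ will be the relevant endofunctor on $\mathbf{C}$ whose algebras we wish to exhibit $Q$ as carrying). I would also remark that the construction is clearly functorial in $Q$ in the obvious sense, although that functoriality is not literally part of the statement.
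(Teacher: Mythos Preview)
Your proof is correct and follows essentially the same approach as the paper: use absoluteness so that $F(q)$ is a coequaliser of $FU(f),FU(g)$, verify via naturality of $\eta$ and $q\circ U(f)=q\circ U(g)$ that $q\circ\eta_B$ coequalises this pair, and then invoke the universal property of $F(q)$ to obtain the unique $\eta_Q$. Your additional observation that only preservation of the coequaliser by $F$ (rather than full absoluteness) is needed is a nice touch not made explicit in the paper.
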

\begin{proof}
	Consider the following diagram:
	\begin{equation}\label{eq:diagram1}
	\begin{tikzcd}
		FU(A) \ar[r,shift left=.75ex,"FU(f)"]
		  \ar[r,shift right=.75ex,swap,"FU(g)"]
		  \ar[d, "\eta_A"]
		&
		FU(B) \ar[r,"F(q)"] \ar[d,"\eta_B"]
		&
		F(Q) \ar[d,densely dotted, "\eta_Q"]
		\\
		U(A) \ar[r,shift left=.75ex,"U(f)"]
		  \ar[r,shift right=.75ex,swap,"U(g)"] 
		 & 
		 U(B)  \ar[r, "q"]
		 &
		  Q
		\end{tikzcd}
	\end{equation}
	We need to show that the morphism $\eta_Q$ exists and is the unique one making the right-hand square commute.

	The naturality of $\eta$ ensures that both squares on the left commute. Because $q$ is an absolute coequaliser of $U(f), U(g)$, we see that $F(q)$ is a coequaliser of $FU(f), FU(g)$, and hence both rows in the diagram are coequalisers.
	We claim that $q\circ \eta_B$ coequalises $FU(f),FU(g)$. 
	Indeed by using that $q\circ U(f) = q\circ U(g)$ and the naturality of $\eta$ we easily calculate
	\[q\circ \eta_B \circ FU(f) = q\circ U(f)\circ \eta_A = q\circ U(g)\circ \eta_A = q\circ \eta_B\circ FU(g).\]
	Now because $F(q)$ is the coequaliser of $FU(f),FU(g)$ we get the unique arrow $\eta_Q$ that makes the right-hand square commute.
\end{proof}

\begin{theorem}\label{thm:monadicity-EA}
	$\omega\textbf{EA}$ is equivalent to the Eilenberg-Moore category of the $\omega$-Kalmbach monad over $\textbf{BPos}$.
\end{theorem}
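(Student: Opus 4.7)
The plan is to apply Beck's monadicity theorem as stated. Since Proposition~\ref{prop:forget-EA-left-adjoint} already provides the left adjoint $K_\omega$ to $U$, it suffices to show $U:\omega\textbf{EA}\to\textbf{BPos}$ reflects absolute coequalisers. Fix a parallel pair $f,g:A\to B$ in $\omega\textbf{EA}$ and an absolute coequaliser $q:U(B)\to Q$ of $U(f),U(g)$ in $\textbf{BPos}$. Absoluteness of $q$ makes it in particular a split epimorphism in $\textbf{BPos}$ (apply a covariant hom functor to get a coequaliser in $\textbf{Set}$ and lift $\id_Q$ through it), so there is a section $s:Q\to U(B)$ with $q\circ s=\id_Q$. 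The strategy is to equip $Q$ with an $\omega$EA structure transported from $B$ via Lemma~\ref{lem:extend-natural}, show $q$ then lifts to an $\omega\textbf{EA}$ morphism, and verify the universal property.

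To transport the effect algebra structure, take the endofunctor $T=UK$ on $\textbf{BPos}$ (the Kalmbach monad) and the EA algebra action $\eta_A:T(U(A))\to U(A)$ on any $A\in\omega\textbf{EA}$. This $\eta$ is natural in $A$ because, by Jenča's isomorphism $\textbf{EA}\cong\textbf{BPos}^T$, EA homomorphisms are exactly $T$-algebra morphisms. Lemma~\ref{lem:extend-natural} then yields a unique $\alpha_Q:T(Q)\to Q$ with $\alpha_Q\circ T(q)=q\circ\eta_B$. A short diagram chase using that $q$ is epi (as a coequaliser) and that $(U(B),\eta_B)$ is a $T$-algebra verifies the $T$-algebra axioms for $(Q,\alpha_Q)$, so by Jenča's theorem $Q$ carries an EA structure for which $q$ becomes an EA homomorphism.

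For $\omega$-completeness, let $F_{\mathrm{seq}}:\textbf{BPos}\to\textbf{BPos}$ send $P$ to the bounded poset of increasing sequences in $P$ (pointwise order, constant $0$ and $1$ sequences as bounds). Sequence suprema define a natural transformation $F_{\mathrm{seq}}U\Rightarrow U$, with naturality being exactly the $\omega$-normality condition that defines morphisms of $\omega\textbf{EA}$. Lemma~\ref{lem:extend-natural} then gives $\sigma_Q:F_{\mathrm{seq}}(Q)\to Q$ satisfying $\sigma_Q((q(b_n)))=q(\bigvee_n b_n)$ for any increasing $(b_n)$ in $U(B)$. For an arbitrary increasing $(x_n)$ in $Q$, the sequence $(s(x_n))$ is increasing in $U(B)$ and lifts $(x_n)$ along $q$, so $\sigma_Q((x_n))=q(\bigvee_n s(x_n))$; a short monotonicity argument using $s$ and $q$ then shows this is the least upper bound of $(x_n)$ in $Q$. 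Hence $Q$ is $\omega$-complete, $q$ is $\omega$-normal, and so $q:B\to Q$ is indeed an $\omega\textbf{EA}$ morphism.

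Finally, for the universal property, any $h:B\to C$ in $\omega\textbf{EA}$ coequalising $f,g$ factors uniquely in $\textbf{BPos}$ as $U(h)=\tilde h\circ q$ by the universal property of $q$ there; the uniqueness clause of Lemma~\ref{lem:extend-natural}, applied separately for $T$ and $F_{\mathrm{seq}}$ with $\tilde h$ playing the role of the extension, forces $\tilde h$ to preserve both the EA operations and the sequence suprema, so $\tilde h$ lifts to an $\omega\textbf{EA}$ morphism. Since $U$ sends this coequaliser to $q$ on the nose, $U$ preserves it, and Beck's theorem concludes. The main obstacle I anticipate is showing that the transported $\sigma_Q$ genuinely computes suprema in $Q$ rather than merely yielding a compatible upper bound: this is precisely where absoluteness of $q$ earns its keep (via the section $s$), and where restricting to $\omega$-normal morphisms in $\omega\textbf{EA}$ is essential, since otherwise sup would fail to be natural in the first place.
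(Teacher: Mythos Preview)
Your overall strategy matches the paper's: invoke Beck's theorem, transport the effect-algebra structure and the supremum operation to $Q$ via Lemma~\ref{lem:extend-natural}, and then check the universal property. The main genuine difference is in how you establish that the transported map $\sigma_Q$ actually computes suprema. The paper builds an auxiliary natural transformation $\alpha:\id\Rightarrow I$ sending an element to its constant sequence, shows $S_Q\circ\alpha_Q=\id_Q$ by a uniqueness argument, and then uses $\alpha_Q(x_j)\leq s$ and $s\leq\alpha_Q(y)$ to extract the upper-bound and least-upper-bound inequalities; to make these comparisons hold it must use the ``eventually $\leq$'' preorder and pass to equivalence classes. Your route is shorter: you observe that an absolute coequaliser is split (via $\mathrm{Hom}(Q,-)$), take a monotone section $s$, lift an arbitrary increasing $(x_n)$ to $(s(x_n))$, and compute $\sigma_Q((x_n))=q(\bigvee_n s(x_n))$ directly, with the upper-bound and least-upper-bound checks following from monotonicity of $q$ and $s$. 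This avoids both the quotient construction and the second natural transformation, at the cost of invoking the section explicitly. Both arguments are sound; yours is arguably cleaner here.

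Two minor points. First, your treatment of the $T$-algebra axioms for $(Q,\alpha_Q)$ is just an unpacking of what the paper does by citing the known monadicity of $\textbf{EA}$ over $\textbf{BPos}$; nothing is lost, but nothing is gained either. Second, your justification of the universal property is slightly mis-phrased: it is not the \emph{uniqueness clause of Lemma~\ref{lem:extend-natural}} that forces $\tilde h$ to preserve the structure, but rather the fact that $T(q)$ and $F_{\mathrm{seq}}(q)$ are epimorphisms (being absolute coequalisers), so one can cancel them after verifying $\tilde h\circ\sigma_Q\circ F_{\mathrm{seq}}(q)=\sigma_C\circ F_{\mathrm{seq}}(\tilde h)\circ F_{\mathrm{seq}}(q)$ and the analogous equation for $T$. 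This is exactly the paper's argument in diagram~\eqref{eq:coequaliser-diagram}; your sketch points in the right direction but should name the correct mechanism.
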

\begin{proof}
	We know that $U:\omega\textbf{EA}\rightarrow\textbf{BPos}$ has a left adjoint, so by Beck's monadicity theorem it suffices to show $U$ reflects absolute coequalisers.

	So let $A$ and $B$ be $\omega$EAs and let $f,g:A\rightarrow B$ be $\omega$\textbf{EA} morphisms such that $U(f),U(g)$ have an absolute coequaliser $q:U(B)\rightarrow Q$ in $\textbf{BPos}$. We need to show that there is an $\omega$EA $Q'$ and an $\omega$\textbf{EA} morphism $q':B\rightarrow Q'$ that coequalises $f,g$. Since we furthermore need $U(Q') = Q$ and $U(q') = q$ and $U$ is forgetful this boils down to 1) showing that $Q$ carries the structure of an effect algebra, 2) that $q$ preserves this structure, 3) that $Q$ is $\omega$-complete, 4) that $q$ is $\omega$-normal and 5) that $q$ is also a coequaliser in $\omega\textbf{EA}$. 

    \vspace{-0.5cm}
	\paragraph{Proving 1) and 2)} 
	Let $U_1:\omega\textbf{EA}\rightarrow\textbf{EA}$ and $U_2:\textbf{EA}\rightarrow\textbf{BPos}$ denote the evident forgetful functors. Then $U=U_2\circ U_1$, and we have $U(A)=U_2(A)$, $U(B)=U_2(B)$, $U(f)=U_2(f)$, $U(g)=U_2(g)$ as objects and morphisms in $\textbf{BPos}$. 
	Since $U_2: \textbf{EA}\rightarrow \textbf{BPos}$ is monadic (via the Kalmbach extension), we then have by the `if direction' of Beck's monadicity theorem that there is an effect algebra $Q'$ such that $U_2(Q')=Q$ and an effect algebra morphism $q':B\rightarrow Q'$ such that $U_2(q')=q$. 
	Since $U_2$ is just forgetful this means that $Q$ is an effect algebra and that $q$ preserves the effect algebra structure.

    \vspace{-0.5cm}
	\paragraph{Proving 3) and 4)}
	We adapt the arguments used in \cite[Theorem~3.1]{jenvca2019pseudo} that proved that pseudo-effect algebras are monadic over bounded posets.

	Let $P$ be a bounded poset. Define $I'(P)=\{s:\N\rightarrow P~;~s~\text{monotone}\}$. That is, $s\in S(P)$ consists of $s(1),s(2),s(3),\ldots \in P$ such that $s(1)\leq s(2)\leq s(3) \leq \ldots$, and hence consists of increasing sequences in $P$. We preorder $I(P)$ by setting $s\leq t \iff \exists N \forall i\geq N: s(i)\leq t(i)$. In words: $s\leq t$ when $s$ is smaller than $t$ pointwise, except for some finite set at the start of the sequence. Then define $I(P):= I'(P)/\sim$ where $\sim$ is the equivalence relation given by the preorder. I.e.~$I(P)$ consists of equivalence classes of sequences that are eventually equal. We see that $I(P)$ is a bounded poset
    with minimal element the (equivalence class of the) constant $0$ function, and maximal element the constant $1$ function.
	We make $I$ into a functor $I:\textbf{BPos}\rightarrow\textbf{BPos}$ by mapping $f:P\rightarrow Q$ to $I(f)(s)(i) = f(s(i))$.

	Let $E$ be an $\omega$EA\@. Define $S_E: IU(E)\rightarrow U(E)$ as $S_E(s) = \bigvee_i s(i)$. It is clear that $S_E$ is monotone and preserves bounds, and hence is a morphism in $\textbf{BPos}$. We claim that it forms a natural transformation $S:IU\Rightarrow U$ for the functors $IU,U:\omega\textbf{EA}\rightarrow\textbf{BPos}$.
	Indeed for any $\omega$-normal morphism $f:E\rightarrow F$ we calculate:
	\begin{align*}
	S_F(IU(f)(s(1)\leq s(2)\leq \ldots)) &= S_F(f(s(1))\leq f(s(2)) \leq \ldots) \\
	&= \vee_i f(s(i)) \\
    &= f(\vee_i s(i)) \\
    &= U(f)(S_E(s(1)\leq s(2)\leq\ldots))
	\end{align*}

	Now, by applying Lemma~\ref{lem:extend-natural} with $F:=I$ and $\eta:=S$ we see that we get a unique map $S_Q:I(Q)\rightarrow Q$ such that $S_Q\circ I(q) = q\circ S_B$.

	We claim that $S_Q$ assigns suprema to increasing sequences in $Q$, so that $Q$ is indeed $\omega$-complete. Once we have shown this, the commutativity of the righthand square of \eqref{eq:diagram1} shows that $q$ preserves these suprema, and thus that it is $\omega$-normal.

	To prove the claim we introduce another natural transformation $\alpha: \id\Rightarrow I$ of functors $\id, I:\textbf{Bpos}\rightarrow\textbf{BPos}$ defined as $\alpha_P(x) = (x\leq x\leq x \leq \ldots)$, i.e.~it maps each element to its `constant' increasing sequence. That $\alpha_P: P\rightarrow I(P)$ is a morphism in $\textbf{BPos}$ and that it forms a natural transformation is easily verified.
	Now, for any $\omega$EA $E$ we also easily see that $S_E\circ \alpha_{U(E)} = \id_{U(E)}$.
	We now augment the diagram \eqref{eq:diagram1} with an additional row.

	\begin{equation}\label{eq:triple-rows}
	\begin{tikzcd}
		U(A) \ar[r,shift left=.75ex,"U(f)"]
		  \ar[r,shift right=.75ex,swap,"U(g)"] 
		  \ar[d, "\alpha_{U(A)}"]
		&
		U(B) \ar[r, "q"]
			\ar[d, "\alpha_{U(B)}"]
		&
		Q \ar[d, "\alpha_Q"]
		\\
		IU(A) \ar[r,shift left=.75ex,"IU(f)"]
		  \ar[r,shift right=.75ex,swap,"IU(g)"]
		  \ar[d, "S_A"]
		&
		IU(B) \ar[r,"I(q)"] \ar[d,"S_B"]
		&
		I(Q) \ar[d, "S_Q"]
		\\
		U(A) \ar[r,shift left=.75ex,"U(f)"]
		  \ar[r,shift right=.75ex,swap,"U(g)"] 
		 & 
		 U(B)  \ar[r, "q"]
		 &
		  Q
		\end{tikzcd}
	\end{equation}
	From the naturality of $\alpha$ it follows the new squares here commute.
	Note that the left and middle vertical arrows compose to identities. Hence, we can `squash' the diagram to the following:

	\begin{equation}\label{eq:triple-row-squash}
	\begin{tikzcd}
		U(A) \ar[r,shift left=.75ex,"U(f)"]
		  \ar[r,shift right=.75ex,swap,"U(g)"] 
		  \ar[d, "\id_{U(A)}"]
		&
		U(B) \ar[r, "q"]
			\ar[d, "\id_{U(B)}"]
		&
		Q \ar[d, "S_Q\circ \alpha_Q"]
		\\
		U(A) \ar[r,shift left=.75ex,"U(f)"]
		  \ar[r,shift right=.75ex,swap,"U(g)"] 
		 & 
		 U(B)  \ar[r, "q"]
		 &
		  Q
		\end{tikzcd}
	\end{equation}
	Note that if we were to replace the rightmost arrow $S_Q\circ \alpha_Q$ in this diagram by $\id_{Q}$ that the squares would still commute. However, due to the present coequalisers the arrow making the squares commute is unique so that we must have $S_Q\circ \alpha_Q = \id_Q$.

	Now, let $x_1\leq x_2 \leq \ldots$ be an increasing sequence in $Q$, and let $s$ be its associated element in $I(Q)$. We claim that $S_Q(s) = \vee_i x_i$. 
    First, let us establish that it is indeed an upper bound of the sequence. For each $x_j$ we note that $\alpha_Q(x_j)\leq s$ (as $s(i)$ is indeed eventually bigger than $x_j$). Then apply the monotone map $S_Q$ to this inequality to get $x_j=\id_Q(x_j) = (S_Q\circ\alpha_Q)(x_j) \leq S_Q(s)$.
	Now suppose $x_i \leq y$ for all $i$ for some $y\in Q$. Then $s\leq \alpha_Q(y)$ in $I(Q)$. Again, apply $S_Q$ to this inequality: $S_Q(s) \leq (S_Q\circ \alpha_Q)(y) = \id_Q(y) = y$, so that $S_Q(s)$ is indeed the least upper bound of the $x_i$.

    \vspace{-0.4cm}
	\paragraph{Proving 5)} It remains to show that $q$ is also the coequaliser of $f$ and $g$ in $\omega\textbf{EA}$.
	Note first that $q\circ f = q\circ g$ since $U$ is the identity on the underlying set structure, so it only remains to show that $q$ has the universal property of coequalisers in $\omega\textbf{EA}$. So let $h:B\rightarrow C$ be an $\omega$\textbf{EA} morphism satisfying $h\circ f = h\circ g$. By applying $U$ to these equalities we find a unique $e:U(Q)\rightarrow U(C)$ satisfying $e\circ U(q) = U(h)$. 
	We need to show that $e$ is $\omega$-normal and that $e$ preserves the addition operation so that it is a morphism in $\omega\textbf{EA}$.
	Consider the following diagram:
	\begin{equation}\label{eq:coequaliser-diagram}
	\begin{tikzcd}
		IU(B) \ar[r, "IU(q)"] \ar[d, "S_B"]
		&
		I(Q) \ar[r, "I(e)"] \ar[d, "S_Q"]
		&
		IU(C) \ar[d, "S_C"]
		\\
		U(B) \ar[r, "U(q)"]
		&
		Q \ar[r, "e"]
		&
		U(C)
	\end{tikzcd}
	\end{equation}
	The left-hand square commutes because $q$ is $\omega$-normal, and the outer rectangle commutes because $e\circ U(q) = U(h)$ is $\omega$-normal. We can hence calculate:
	\[e\circ S_Q \circ IU(q) = e\circ U(q)\circ S_B = S_C\circ I(e)\circ IU(q).\]
	Now because $IU(q)$ is a coequaliser, it is an epimorphism, and hence we can `cancel' it from this equation to get $e\circ S_Q = S_C\circ I(e)$, which indeed shows that $e$ is $\omega$-normal.

	Proving that $e$ is an effect algebra homomorphism is done using a similar argument. It is entirely analogous to the proof given in~\cite{jenvca2019pseudo}, so we only give a brief sketch. First, construct the `interval' functor $I_2:\textbf{BPos}\rightarrow\textbf{BPos}$ by $I_2(P) =\{(x,y)\in P^2~;~ x\leq y\}$ and order it by $[x_1\leq y_1] \leq [x_2\leq y_2] \iff x_2\leq x_1~\&~y_1\leq y_2$. We then construct a natural transformation $\ominus: I_2 \circ U\rightarrow U$ for the functors $I_2\circ U, U:\omega\textbf{EA}\rightarrow\textbf{BPos}$ by $\ominus_E([x\leq y]) = y\ominus x$. The naturality of $\ominus$ witnesses the fact that morphisms $f:E\rightarrow F$ preserve $\ominus$ and thus are effect algebra homomorphisms.
	We then construct a diagram like \eqref{eq:coequaliser-diagram} but with $I$ replaced by $I_2$ and $S$ replaced by $\ominus$, and use the same argument to show that $e$ must preserve $\ominus$ as well.
\end{proof}

Now that we know that $\omega$-complete effect algebras are also monadic over the bounded posets we can give a different characterisation of the real unit interval, by referring to monoids in this category.

\begin{remark}
    Effect monoids were defined with respect to the tensor product in $\textbf{EA}$, which relied on the definition of effect algebra bimorphisms (see Definition~\ref{def:bimorphism}). In $\omega\textbf{EA}$ morphisms are $\omega$-normal, so we need to modify this notion of bimorphism, which would a priori give us a different notion of `$\omega$-effect monoid'.
    However, as was shown in \cite[Theorem~45]{first}, when we have an effect monoid that is $\omega$-complete, its multiplication is automatically normal. Hence, monoids in $\textbf{EA}$ that are $\omega$-complete coincide with monoids in $\omega\textbf{EA}$ produced using the modified definition of the tensor product.
\end{remark}

\begin{theorem}
	The real unit interval is the unique non-initial, non-final, irreducible monoid in $\textbf{BPos}^T$ where $T$ is the $\omega$-Kalmbach monad.
\end{theorem}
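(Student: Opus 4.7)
The plan is to reduce this theorem almost entirely to the previous proposition classifying irreducible $\omega$-effect monoids, with the bridge provided by Theorem~\ref{thm:monadicity-EA} and the preceding remark about tensor products. First I would invoke Theorem~\ref{thm:monadicity-EA} to identify $\textbf{BPos}^T$ with $\omega\textbf{EA}$ as categories. Under this equivalence, products are preserved (since $U$ creates limits, as in the proof that $\omega\textbf{EA}$ is complete), so the notion of irreducibility of a monoid transports between the two sides.

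Next, I would use the Remark immediately preceding the statement to identify monoids in $\omega\textbf{EA}$ (with respect to the appropriate tensor product) with $\omega$-effect monoids in the original sense, i.e.\ with $\omega$-complete effect monoids in $\textbf{EA}$. The point here is that, by \cite[Theorem~45]{first}, the multiplication of an $\omega$-complete effect monoid is automatically $\omega$-normal, so there is no clash between the two possible definitions of the tensor product.

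At this point the question reduces to: what are the non-initial, non-final irreducible $\omega$-effect monoids? By the earlier Proposition (the corollary of the representation theorem for $\omega$EMs from~\cite{first}), the only irreducible $\omega$EMs are $\{0\}$, $\{0,1\}$ and $[0,1]$. Finally I would verify that $\{0\}$ is final in $\omega\textbf{EA}$ (it is clearly the terminal object since every effect algebra admits a unique map to the one-point effect algebra preserving $1$) and that $\{0,1\}$ is initial (the image of $0$ and $1$ is forced, and the map is automatically a morphism of effect algebras, hence of $\omega$EAs), so that the unique remaining possibility is $M\cong[0,1]$.

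There is really no substantial obstacle here: all the hard work was done in Theorem~\ref{thm:monadicity-EA} (establishing monadicity via Beck) and in the representation theorem of~\cite{first}. The only mildly subtle point is making sure that the notion of irreducible monoid is interpreted consistently across the equivalence $\textbf{BPos}^T \simeq \omega\textbf{EA}$, but this is immediate once one notes that the equivalence commutes with the forgetful functor to $\textbf{BPos}$ and that both products and the relevant tensor product are defined using the same underlying bounded poset with pointwise operations.
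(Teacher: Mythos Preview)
Your proposal is correct and matches the paper's intended argument: the paper states this theorem without proof, relying on Theorem~\ref{thm:monadicity-EA}, the preceding Remark on tensor products, and the earlier classification of irreducible $\omega$-effect monoids, exactly as you outline (and mirroring the short proof given for the analogous theorem with the ordinary Kalmbach monad). Your additional remarks on transporting products and irreducibility across the equivalence are a reasonable elaboration of what the paper leaves implicit.
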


\subsection{\texorpdfstring{$\omega$}{omega}-effect monoids as Eilenberg-Moore algebras}

In this last characterisation of the real unit interval we made reference first to a type of Eilenberg-Moore algebra, and then to monoids in that category. It turns out we can cut out the middle man and directly consider $\omega$-complete effect monoids as a type of Eilenberg-Moore algebra.

\begin{definition}
    Let $\omega\textbf{EM}$ denote the category of $\omega$-effect monoids and $\omega$-normal effect monoid homomorphisms (i.e.~$\omega$-effect algebra homomorphisms that also preserve the product operation).
\end{definition}

\begin{theorem}\label{thm:monoid-monadicity}
	$\omega\textbf{EM}$ is equivalent to the Eilenberg-Moore category of some monad over $\textbf{BPos}$.
\end{theorem}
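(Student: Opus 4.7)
The plan is to prove the theorem by appealing to Beck's monadicity theorem, reusing the template of the proof of Theorem~\ref{thm:monadicity-EA}. First I would verify the preliminary hypotheses. The category $\omega\textbf{EM}$ is complete, with products and equalisers computed pointwise exactly as in $\omega\textbf{EA}$; the multiplication is inherited in the obvious way. The forgetful functor $U : \omega\textbf{EM} \to \textbf{BPos}$ has a left adjoint by the adjoint functor theorem, where the solution set condition follows by the same cardinality argument as in Proposition~\ref{prop:forget-EA-left-adjoint}: the $\omega$-effect monoid generated by the image of a bounded poset $P$ inside any $\omega$-effect monoid still has cardinality bounded by $\max(\#(P), \aleph_1)$, so the same collection $\mathcal{W}_P$ of representative bounded posets suffices.

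It then remains to show that $U$ reflects absolute coequalisers. Let $f, g : A \to B$ in $\omega\textbf{EM}$ be such that $U(f),U(g)$ have an absolute coequaliser $q : U(B) \to Q$ in $\textbf{BPos}$. Theorem~\ref{thm:monadicity-EA} already lifts $Q$ to an $\omega$-effect algebra $Q'$ with $q$ becoming an $\omega$-normal effect algebra morphism $q' : B \to Q'$. The new content is to equip $Q$ with a compatible multiplication. For this I would introduce the functor $M : \textbf{BPos} \to \textbf{BPos}$ defined by $M(P) = P \times P$ together with the natural transformation $\mu : MU \Rightarrow U$ whose component at an $\omega$-effect monoid $E$ is $\mu_E(x,y) = x \cdot y$. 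Each $\mu_E$ is a $\textbf{BPos}$-morphism because the multiplication is monotone and satisfies $\mu_E(0,0) = 0$ and $\mu_E(1,1)=1$, and naturality just expresses the fact that $\omega\textbf{EM}$-morphisms preserve multiplication. Applying Lemma~\ref{lem:extend-natural} with $F := M$ and $\eta := \mu$ yields a unique map $\mu_Q : Q \times Q \to Q$ satisfying $\mu_Q \circ (q \times q) = q \circ \mu_B$, which I take as the multiplication on $Q$.

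The monoid axioms are then verified by the uniqueness trick used throughout the proof of Theorem~\ref{thm:monadicity-EA}: any two maps $Q^n \to Q$ whose precompositions with $q \times \cdots \times q$ yield the same map $U(B)^n \to Q$ must coincide, because products of $q$ with itself remain coequalisers (absolute coequalisers are preserved by any functor, including iterated Cartesian products) and hence epimorphisms. For associativity one compares the extensions of the two natural transformations $(x,y,z) \mapsto (x\cdot y)\cdot z$ and $(x,y,z) \mapsto x \cdot (y\cdot z)$, which already agree on every $\omega$-effect monoid; for unitality one compares $\mu_Q(1_Q, -)$ with $\id_Q$; and for the bimorphism axiom with $\ovee$ one packages $\mu$ together with the natural transformation encoding $\ovee$ implicit in the proof of Theorem~\ref{thm:monadicity-EA}. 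Finally, the universal property of $q$ in $\omega\textbf{EM}$ is established exactly as in the earlier theorem by adding a multiplicative analogue of diagram~\eqref{eq:coequaliser-diagram}, with $I$ replaced by $M$ and $S$ by $\mu$, to show that the induced map $e$ preserves multiplication. The main obstacle I expect is the bookkeeping for the bimorphism axiom: one must package $\ovee$ and $\cdot$ together into a single natural transformation out of a suitable endofunctor of $\textbf{BPos}$ so that Lemma~\ref{lem:extend-natural} applies, which is slightly more delicate than the associativity and unit axioms but otherwise follows the same pattern.
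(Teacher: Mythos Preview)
Your plan is essentially the paper's own proof: Beck's monadicity theorem, the same cardinality argument for the left adjoint, factoring through Theorem~\ref{thm:monadicity-EA} for the $\omega$EA structure, and then Lemma~\ref{lem:extend-natural} applied to the squaring functor with the multiplication as natural transformation. Your associativity and coequaliser arguments match the paper's, and your unitality argument (cancelling the epimorphism $q$) is if anything slightly cleaner than the paper's, which instead builds an explicit natural transformation $\epsilon:\id\Rightarrow P_2$ sending $a\neq 0$ to $(a,1)$ and $0$ to $(0,0)$ and then runs the ``squash'' diagram trick.

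The one place where the paper is more specific is exactly the obstacle you flag: distributivity. Because $\ovee$ is only partial, you cannot package it directly as a natural transformation out of $P\times P$. The paper's fix is to work with the interval functor $I_2(P)=\{(a,b):a\leq b\}$ and the total difference $\ominus:I_2\circ U\Rightarrow U$, then express distributivity as the equality $\ominus\circ\beta=\mu\circ(\id\times\ominus)$ for the natural transformation $\beta_A(a,[b\leq c])=[a\cdot b\leq a\cdot c]$. This equality of natural transformations then transfers to $Q$ by the same uniqueness argument you describe. So the ``suitable endofunctor'' you are looking for is $\id\times I_2$, and the key move is to replace $\ovee$ by $\ominus$ to obtain totality.
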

\begin{proof}
	The proof is similar in structure to that of Theorem~\ref{thm:monadicity-EA}.
    By a proof entirely analogous to that of Proposition~\ref{prop:forget-EA-left-adjoint} we can show that the forgetful functor $U:\omega\textbf{EM}\rightarrow \textbf{BPos}$ has a left-adjoint. By Beck's monadicity theorem it then remains to show that $U$ reflects absolute coequalisers.

    So just as in the proof of Theorem~\ref{thm:monadicity-EA}, suppose $f,g:A\rightarrow B$ are $\omega\textbf{EM}$ morphisms such that $U(f),U(g)$ have an absolute coequaliser $q:U(B)\rightarrow Q$ in $\textbf{BPos}$.
    We need to show that $Q$ carries the structure of an $\omega$EM and that $q$ is an $\omega\textbf{EM}$ morphism.
    As the forgetful functor factors through $\omega\textbf{EA}$ we already know that $Q$ is an $\omega$EA and that $q$ is an $\omega$\textbf{EA} morphism. It remains to show that $Q$ also has an associative, unital and distributive multiplication which is preserved by $q$.

    \vspace{-0.3cm}
    \paragraph{Existence of a product and preservation by $q$}
    Write $P_2:\textbf{BPos}\rightarrow\textbf{BPos}$ for the product functor $P_2(A):=A\times A$. 
    For an $\omega$EM $M$ define $\mu_M:P_2(U(M))\rightarrow U(M)$ by $\mu_M(a,b)=a\cdot b$. It is easily verified that $\mu$ then forms a natural transformation $\mu:P_2\circ U\Rightarrow U$. By applying Lemma~\ref{lem:extend-natural} with $F:=P_2$ and $\eta:=\mu$ we see then that there is a unique $\textbf{Bpos}$ morphism $\mu_Q:P_2(Q)\rightarrow Q$ satisfying $\mu_Q\circ P_2(q) = q\circ \mu_B$. We define a product operation on $Q$ by $a\cdot_Q b:= \mu_q(a,b)$. The naturality condition $\mu_Q\circ P_2(q) = q\circ \mu_B$ shows that $q$ is indeed a homomorphism for this product.

    \vspace{-0.3cm}
    \paragraph{Unitality of the product}
    For $A\in\textbf{BPos}$ define $\epsilon_A: A\rightarrow P_2(A)$ by $\epsilon_A(0) = (0,0)$ and $\epsilon_A(a) = (a,1)$ for $a\neq 0$. It is straightforward to check that $\epsilon_A$ is indeed a morphism in $\textbf{Bpos}$. Furthermore, it defines a natural transformation $\epsilon:\id\Rightarrow P_2$.
    Note that for any $\omega$EM $M$ we have $\mu_M\circ \epsilon_{U(M)} = \id_{U(M)}$ as $\mu_M(\epsilon_{U(M)}(a)) = \mu_M(a,1) = a\cdot 1 = a$ for $a\neq 0$ and similarly for $a=0$.
    We can then construct diagrams analogous to~\eqref{eq:triple-rows} and~\eqref{eq:triple-row-squash}, but with $\epsilon$ and $\mu$ instead of $\alpha$ and $S$ to show that $\mu_Q\circ\epsilon_Q = \id_Q$, and hence $a\cdot_Q 1 = a$. To show $1\cdot_Q a = a$ we can use an entirely analogous argument.

    \vspace{-0.3cm}
    \paragraph{Associativity of the product}
    Let $P_3:\textbf{Bpos}\rightarrow \textbf{BPos}$ denote the product functor $P_3(A):=A\times A\times A$.
    Then $\mu\times \id:P_3\circ U \Rightarrow P_2 U$ forms a natural transformation given by $(a,b,c)\mapsto (a\cdot b, c)$. Composing this with the natural transformation $\mu:P_2\circ U\Rightarrow U$ we get 
    $\mu^L:=\mu\circ (\mu\times \id):P_3\circ U \Rightarrow U$ given by $(a,b,c)\mapsto (a\cdot b) \cdot c = a\cdot b\cdot c$.
    Defining the natural transformation $\id\times \mu$ analogously we then see that $\mu^R = \mu\circ (\id\times \mu)  = \mu\circ (\mu\times \id) = \mu^L$, which expresses the associativity of the product in an effect monoid.
    Now, take the diagram~\eqref{eq:triple-rows} but with $S$ replaced by $\mu$ and $\alpha$ replaced by $\mu\times \id$ and `squash' the columns like in diagram~\eqref{eq:triple-row-squash}. Do the same but instead with $\alpha$ replaced by $\id\times \mu$. We then have the following pair of diagrams:
    \begin{equation}\label{eq:diagrams-equality}
        \begin{tikzcd}
        U(A) \ar[r,shift left=.75ex,"U(f)"]
          \ar[r,shift right=.75ex,swap,"U(g)"] 
          \ar[d, "\mu^L_A"]
        &
        U(B) \ar[r, "q"]
            \ar[d, "\mu^L_B"]
        &
        Q \ar[d, "\mu^L_Q"]
        \\
        U(A) \ar[r,shift left=.75ex,"U(f)"]
          \ar[r,shift right=.75ex,swap,"U(g)"] 
         & 
         U(B)  \ar[r, "q"]
         &
          Q
    \end{tikzcd}
    \qquad\qquad
        \begin{tikzcd}
        U(A) \ar[r,shift left=.75ex,"U(f)"]
          \ar[r,shift right=.75ex,swap,"U(g)"] 
          \ar[d, "\mu^R_A"]
        &
        U(B) \ar[r, "q"]
            \ar[d, "\mu^R_B"]
        &
        Q \ar[d, "\mu^R_Q"]
        \\
        U(A) \ar[r,shift left=.75ex,"U(f)"]
          \ar[r,shift right=.75ex,swap,"U(g)"] 
         & 
         U(B)  \ar[r, "q"]
         &
          Q
    \end{tikzcd}
\end{equation}
The arrows $\mu^L_Q$ and $\mu^R_Q$ in their respective diagrams are the unique ones making the square commute due to the presence of the coequalisers. However, as $\mu^L_A = \mu^R_A$ and $\mu^L_B=\mu^R_B$, we see that that we can replace $\mu^R_Q$ in the diagram on the right by $\mu^L_Q$ and still retain this commutative square. Hence, by uniqueness $\mu^L_Q = \mu^R_Q$, so that the product $\cdot_Q$ is indeed associative.

\vspace{-0.3cm}
\paragraph{Distributivity of the product} It remains to show that $a\cdot_Q(b\ovee c) = (a\cdot_Q b) \ovee (a\cdot_Q c)$ (the case for $(b\ovee c)\cdot_Q a$ follows by symmetry). As it is more straightforward, we will prove the equivalent statement $a\cdot_Q (c\ominus b) = (a\cdot_Q c)\ominus (a\cdot_Q b)$ for all $a$ and $b\leq c$ in $Q$.
We do this by constructing some appropriate functors and natural transformations.
As at the end of the proof of Theorem~\ref{thm:monadicity-EA} we require the interval functor $I_2:\textbf{BPos}\rightarrow\textbf{BPos}$ defined by $I_2(P) =\{(a,b)\in P^2~;~ a\leq b\}$ which is ordered by $[a_1\leq b_1] \leq [a_2\leq b_2] \iff a_2\leq a_1~\&~b_1\leq b_2$. 
Then there is a natural transformation $\ominus: I_2 \circ U\rightarrow U$ defined by $\ominus_M([b\leq c]) = c\ominus b$. This natural transformation extends to $Q$ by Lemma~\ref{lem:extend-natural}.
We also have a natural transformation $\beta:(\id\times I_2)\circ(P_2\circ U) \Rightarrow I_2\circ U$ given by morphisms $\beta_A: A\times I_2(A)\rightarrow I_2(A)$ defined by $\beta_A(a, [b\leq c]) = [a\cdot b \leq a\cdot c]$.
The distributivity of the product over addition is then witnessed by the equation of natural transformations $\ominus\circ \beta = \mu\circ (\id\times \ominus)$.
By building an analogue of the diagrams~\eqref{eq:diagrams-equality} we can extend this equality to those morphisms defined for $Q$, so that multiplication is also distributive in $Q$.

\vspace{-0.3cm}
\paragraph{$q$ is a coequaliser} We now know that $Q$ is an $\omega$EM and that $q$ is an $\omega$\textbf{EM} morphism. It remains to show that $q$ is also a coequaliser in $\omega$\textbf{EM}. This is done in the same way as in the proof of Theorem~\ref{thm:monadicity-EA} (i.e.~we need to show that the unique morphisms that exist due to the universal property of $q$ in \textbf{BPos} are actually $\omega$\textbf{EM} morphisms; this is done by constructing some appropriate commutative diagrams).
\end{proof}

\begin{corollary}
	There exists a monad $T$ on \textbf{BPos} such that the real unit interval is the unique non-initial, non-final irreducible algebra in $\textbf{BPos}^T\cong \omega\textbf{EM}$.
\end{corollary}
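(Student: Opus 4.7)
The plan is to combine Theorem~\ref{thm:monoid-monadicity} with the classification of irreducible $\omega$-effect monoids established earlier in the paper, in the same spirit as the analogous characterisation stated using the $\omega$-Kalmbach monad. First I would let $T$ be the monad on $\textbf{BPos}$ produced by Theorem~\ref{thm:monoid-monadicity}, so that $\textbf{BPos}^T \simeq \omega\textbf{EM}$. Under this equivalence an algebra in $\textbf{BPos}^T$ corresponds to an $\omega$-effect monoid and an algebra morphism to an $\omega$-normal multiplicative homomorphism, so the statement reduces to identifying the non-initial, non-final irreducible objects of $\omega\textbf{EM}$.

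Next I would invoke the earlier Proposition that classifies irreducible $\omega$-effect monoids, which tells us that up to isomorphism the only such objects are $\{0\}$, $\{0,1\}$, and $[0,1]$. To make sure the categorical notion of irreducibility used in the statement of the corollary agrees with the one from that proposition, one needs the product in $\omega\textbf{EM}$ to be carried by the pointwise Cartesian product. This follows by the same completeness argument used for $\omega\textbf{EA}$: the forgetful functor $\omega\textbf{EM}\to\textbf{BPos}$ factors through $\omega\textbf{EA}$ and $\textbf{EA}$, each of which has Cartesian products formed pointwise, and the extra multiplicative structure is also pointwise, so products transport cleanly.

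Finally I would check which of the three irreducible $\omega$EMs are initial or final in $\omega\textbf{EM}$. The singleton $\{0\}$, in which $0=1$, is final because every $\omega$EM admits the unique collapse map into it; and the two-element Boolean algebra $\{0,1\}$ is initial because any morphism out of it is forced to send $0\mapsto 0$ and $1\mapsto 1$, and this unique candidate is trivially $\omega$-normal and multiplicative since there are no non-constant increasing sequences or nontrivial products in $\{0,1\}$ to check. This eliminates $\{0\}$ and $\{0,1\}$, leaving $[0,1]$ as the only remaining non-initial, non-final irreducible algebra in $\textbf{BPos}^T$.

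The argument is essentially bookkeeping on top of Theorem~\ref{thm:monoid-monadicity} and the earlier classification. The only point requiring a moment's thought, and the closest thing to a genuine obstacle, is confirming that the notion of irreducibility for algebras in $\textbf{BPos}^T$ really matches the effect-monoid notion used in the classification proposition; but as sketched this is immediate from the pointwise description of products in $\omega\textbf{EM}$.
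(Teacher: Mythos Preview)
Your proposal is correct and matches the paper's approach: the paper states the corollary without proof, treating it as an immediate consequence of Theorem~\ref{thm:monoid-monadicity} together with the earlier classification of irreducible $\omega$-effect monoids and the identification of $\{0\}$ and $\{0,1\}$ as final and initial. Your extra care in checking that products in $\omega\textbf{EM}$ are computed pointwise (so that the categorical and effect-monoid notions of irreducibility coincide) is a detail the paper leaves implicit, but it is straightforward and your treatment of it is fine.
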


\section{Conclusion}

We looked at the theory of $\omega$-complete effect monoids and showed how it can be used to give a categorical characterisation of the real unit interval. These effect monoids have an order, addition, complement, and multiplication, so this shows that this algebraic structure suffices to reconstruct the unit interval. 

An interesting aspect of our characterisation is that the unit interval is the unique irreducible monoid (in a suitable category) that is not initial nor final. This seems to say that while it is a simple structure (because it is irreducible), it is not \emph{too} simple (initial or final).

For future work it would be interesting to give a concrete description of the $\omega$-Kalmbach monad. While such a concrete description might prove difficult for general bounded posets, it might be easier for totally ordered sets like the real unit interval, because the description of the regular Kalmbach extension for those sets is also simpler (it is essentially the free Boolean algebra generated by its downsets), so perhaps we get the $\omega$-Kalmbach extension by looking at free $\omega$-Boolean algebras. A perhaps easier question to answer is whether the $\omega$-Kalmbach extension of a bounded poset always forms an orthomodular poset itself, and whether the monad arises from an adjunction with the category of $\omega$-complete orthomodular posets.

In the last couple of years there has been significant progress in the study of synthetic probability theory, for instance through the use of Markov categories~\cite{fritzSyntheticApproachMarkov2020,fritz2020representable,Jacobs2020DeFinetti,cho2017disintegration}. It would be interesting to see how the work in this paper can be related to those frameworks.

\paragraph{Acknowledgements}
The author would like to thank Bas and Bram Westerbaan for insightful discussions.
The author is supported by a Rubicon fellowship financed by the Dutch Research Council (NWO).

\bibliographystyle{eptcs}
\bibliography{main}

\end{document}